\newtheorem{theorem}{Theorem}[section]
\newtheorem{lemma}[theorem]{Lemma}
\newtheorem{corollary}[theorem]{Corollary}
\theoremstyle{definition}
\newtheorem{definition}[theorem]{Definition}
\newtheorem{remark}[theorem]{Remark}
\numberwithin{equation}{section}
\newcommand{\be}{\begin{equation}}
\newcommand{\ee}{\end{equation}}
\newcommand{\beq}{\begin{eqnarray}}
\newcommand{\eeq}{\end{eqnarray}}
\newcommand{\beqy}{\begin{eqnarray*}}
\newcommand{\eeqy}{\end{eqnarray*}}
 \def\s{{\sigma}}
\newcommand{\bt}{\begin{theorem}}
\newcommand{\et}{\end{theorem}}
\newcommand{\bp}{\begin{prop}}
\newcommand{\ep}{\end{prop}}
\newcommand{\bc}{\begin{corollary}}
\newcommand{\ec}{\end{corollary}}
\newcommand{\bl}{\begin{lem}}
\newcommand{\el}{\end{lem}}
\newcommand{\br}{\begin{rem}}
\newcommand{\er}{\end{rem}}
\newcommand{\beg}{\begin{exam}}
\newcommand{\eeg}{\end{exam}}
\newcommand{\bit}{\begin{itemize}}
\newcommand{\eit}{\end{itemize}}
\newcommand{\bin}{\begin{enumerate}}
\newcommand{\ein}{\end{enumerate}}
\newcommand{\g}{\gamma}
\newcommand{\e}{\epsilon}
\newcommand{\oo}{\infty}
\title{The Riesz Capacity in Metric Spaces}
\author{Juho Nuutinen and Pilar Silvestre}
\newcommand\diam{\operatorname{diam}}
\newcommand\dist{\operatorname{d}}
\providecommand{\ch}[1]{\text{\raise 2pt \hbox{$\chi$}\kern-0.2pt}_{#1}}
\providecommand{\vint}[1]{\mathchoice
          {\mathop{\vrule width 5pt height 3 pt depth -2.5pt
                  \kern -9pt \kern 1pt\intop}\nolimits_{\kern -5pt{#1}}}%
          {\mathop{\vrule width 5pt height 3 pt depth -2.6pt
                  \kern -6pt \intop}\nolimits_{\kern -3pt{#1}}}%
          {\mathop{\vrule width 5pt height 3 pt depth -2.6pt
                  \kern -6pt \intop}\nolimits_{\kern -3pt{#1}}}%
          {\mathop{\vrule width 5pt height 3 pt depth -2.6pt
                  \kern -6pt \intop}\nolimits_{\kern -3pt{#1}}}}
\begin{document}

\thanks{The research was supported by the Academy of Finland, grant no. 272886}

\begin{abstract}
We study a capacity theory based on a definition of a Riesz potential in metric spaces with a doubling measure. In this general setting, we study the basic properties of the Riesz capacity, including monotonicity, countable subadditivity and several convergence results. We define a modified version of the Hausdorff measure and provide lower bound and upper bound estimates for the capacity in terms of the modified Hausdorff content. 
\end{abstract}

\subjclass[2010]{31E05, 31B15}

\maketitle

\section{Introduction}

In this paper, we study a theory of capacity based on a metric version of the Riesz potential in the setting of a general metric space $(X, d)$ equipped with a doubling measure $\mu$. We define a related Hausdorff measure and study the connections between the Riesz capacity and the Hausdorff measure. 
With our definitions and results, we extend the classical Riesz capacity theory from the Euclidean space, with the Lebesgue measure, to the setting of a general metric measure space.
In $\mathbb{R}^n$, the capacity theory for the Riesz potential can be found for example in \cite{AH}, \cite{AE}, \cite{Me} and \cite{Mi}.
During the past twenty years, different capacities have been studied in metric measure spaces for example in \cite{BB}, \cite{GT}, \cite{HaKi}, \cite{HKST}, \cite{KS}, \cite{KKST}, \cite{KM}, \cite{KM2} and \cite{L}. Also, a part of the theory for Riesz capacity follows from general results in \cite{Fu} and \cite{Sj2}. Here, we formulate the theory explicitly and state the results to keep the paper self-contained. 

We define a metric version of the Riesz potential of order $\gamma$, where $0<\gamma<1$, as $$I_{\gamma} f(x) = \int_{X} \frac{f(y)}{\mu\left(B(x,d(x,y))\right)^{1-\gamma}} \, d\mu (y).$$ One can find a similar definition for the Riesz potential in the works of Kairema and Sj\"odin (see \cite{Ka}, \cite{Sj} and \cite{Sj2}). In the definition, there appears only the measure of balls in the Riesz kernel. Another definition for a metric version of the Riesz potential is such that it also has the distance function as a part of the kernel. This version of the Riesz potential can be found for example in \cite{HK}, \cite{He} and \cite{Ma1}. Also, other Riesz potentials and fractional integral operators have been studied in the metric setting for example in \cite{GV}, \cite{GSV} and \cite{Ma2}. We emphasize that, throughout the paper, we do not assume any type of (Ahlfors) $Q$-regularity on the measure $\mu$ that would give uniform lower bounds or upper bounds for the measure of balls in terms of the radii. In this generality, our definition of the Riesz potential, with no distance function as a part of the kernel, works better.

In Section 3, we define a metric version of the Riesz capacity $\mathcal{C}_{\gamma,p}$ and show that it satisfies the basic properties of capacity. These properties include monotonicity, countable subadditivity and several convergence results. In particular, we show that the Riesz capacity is a Fatou capacity. This lower semicontinuity property of capacity is an analogue of Fatou's lemma. We also study the capacitability of sets and show that the Riesz capacity is a so called Choquet capacity. This means that the capacity of a Borel set can be obtained by approximating with compact sets from the inside and open sets from the outside. We finish the section by briefly studying the dual Riesz capacity.

In the beginning of Section 4, we prove an upper bound estimate for the capacity of balls in terms of the measure $\mu$. This result leads us to define a modified version of the standard Hausdorff content. The main results of the section are lower bound and upper bound estimates for the Riesz capacity in terms of this modified Hausdorff content.
Similar results have been studied by Sj\"odin in \cite{Sj}. 
Here, we give direct proofs to results that apply not only for compact sets. In particular, we do not need to use Frostman's lemma to obtain the results.  


\textbf{Acknowledgements.} We would like to thank Professor Juha Kinnunen for proposing this project. We would also like to thank Juha Lehrb\"ack and Heli Tuominen for useful discussions and comments on the manuscript.

\section{Notation and preliminaries}

\subsection{Riesz potential}

We assume that $X=(X,d,\mu)$ is a locally compact metric measure space
equipped with a metric $d$ and a Borel regular,  doubling outer
measure $\mu$. The doubling property means that there is a fixed constant
$c_d\geq 1$, called the doubling constant of $\mu$, such that 
\begin{equation}\label{doubling measure}
\mu(B(x,2r))\le c_d\mu(B(x,r))
\end{equation}
for every ball $B(x,r)=\{y\in X:\dist(y,x)<r\}$. We also assume that the measure of each open ball is positive and finite. The doubling condition implies that
\begin{equation}\label{doubling dimension}
\frac{\mu(B(y,r))}{\mu(B(x,R))}\ge C\Big(\frac rR\Big)^Q 
\end{equation}
for every $0<r\le R$ and $y\in B(x,R)$ for some $C>0$ and $Q>0$ that only depend on $c_d$. 
In fact, we may take $Q=\log_2c_d$ and $C=c_d^{-2}$ (see~\cite{BB}). In addition, we assume that spheres are of measure zero, i.e. 
\begin{equation}\label{spheres}
\mu \left( \left\{y \in X: d(x,y) = r\right\}\right)= 0,
\end{equation}
for $x \in X$ and $B(x,r)$. This assumption is needed for the Riesz potential, defined below, to satisfy lower semicontinuity properties (see Remark 3.3) that are required for the capacity theory.
\begin{definition}
Let $0<\gamma<1$. The Riesz potential of order $\gamma$ of a measurable function $f$  is
\begin{equation}\label{our definition}
I_{\gamma} f(x) = \int_{X} \frac{f(y)}{\mu\left(B(x,d(x,y))\right)^{1-\gamma}} \, d\mu (y).
\end{equation}
\end{definition}
\begin{remark}
(i) To be precise, we would need to define the kernel separately for the cases $x\neq y$ and $x=y$. However, we assume our space $X$ to be such that $\mu$ vanishes on sets which consist of a single point. Then the domain of integration $X\setminus \{x\}$ can be replaced by $X$ (see~\cite{Ka}). Since we have a doubling metric measure space, this is equivalent to the condition that there are no isolated points in our space $X$. \\
(ii) In the Euclidean space, with the $n$-dimensional Lebesgue measure, we have, with the notation $\alpha = \gamma n \in (0,n)$, the usual Riesz potential $$I_\alpha f(x)= \int_{\mathbb{R}^n} \frac{f(y)}{|x-y|^{n-\alpha}} \, dy$$ of order $\alpha$ on $\mathbb{R}^n$ (up to a dimensional constant). 
\end{remark}
Another way to define a Riesz potential in a metric space, as in~\cite{HK} and \cite{He}, is
\begin{equation}\label{other definition}
\widetilde I_{\gamma} f(x) = \int_{X} \frac{f(y) d(x,y)^{\gamma}}{\mu\left(B(x,d(x,y))\right)} \, d\mu (y).
\end{equation}
If the measure $\mu$ is (Ahlfors) $Q$-regular, that is, there exists a constant $C>1$ such that 
\begin{equation}\label{ahlfors}
C^{-1}r^Q\le\mu(B(x,r))\le Cr^Q
\end{equation}
for every $x\in X$ and $0<r<\diam(X)$, then $I_\gamma f$ and $\widetilde I_{\gamma Q} f$ are comparable
in the sense that there exists a constant $C\ge1$ 
such that
\[
C^{-1} I_\gamma f\le\widetilde I_{\gamma Q} f\le C I_\gamma f.
\]

In the next sections, we do not assume the (Ahlfors) $Q$-regularity or any other estimates that would give uniform lower bounds or upper bounds for the measure of balls in terms of the radii. We assume only the doubling property \eqref{doubling measure} and develop the theory of Riesz capacity based on the definition \eqref{our definition} of the Riesz potential. In particular, this definition works better for our purposes in Section 4, where we define a modified version of the standard Hausdorff measure and prove two results that relate the Riesz capacity and the Hausdorff measure. 

\subsection{Function spaces and capacities}
We have by Cavalieri's principle that $L^p(X)=L^p(X,\mu)$ is the space of all $\mu$-measurable functions $f$ in $X$ such that 
$$ \|f\|_{L^p(X)} =\left( \int_{0}^{\oo} p t^{p-1} \mu(\{z \in X: |f(z)|>t\}) dt \right)^{1/p} < \oo,$$
which is a Banach space when $1 \leq p < \infty$. The weak $L^p$-space $L^{p,\oo}(X)$ is defined by the condition
$$\|f\|_{L^{p, \oo}(X)} := \sup_{t > 0 } t \mu\left(\{z \in X: |f(z)| > t\}\right)^{{1}/{p}}<\oo.$$ We denote by $L^p_{+}(X)$ the subset of $L^p(X)$ of non-negative functions. 
\begin{definition}
We define, on the family of $\mu$-measurable subsets of $X$, a capacity to be a non-negative set function $\mathcal{C}$, which has the following properties: 
\begin{itemize}
\item[(a)] $\mathcal{C}(\emptyset)=0$,
\item[(b)] If $A \subset B$, then $\mathcal{C}(A) \leq \mathcal{C}(B)$, 
\item[(c)] $\mathcal{C}\left(\bigcup_{i=1}^{\infty} A_i \right) \leq \sum_{i=1}^{\infty} \mathcal{C}(A_i)$. \end{itemize} 
\end{definition}
A capacity $\mathcal{C}$ is called a Fatou capacity if $\mathcal{C}( A_{i}) \to \mathcal{C}(A)$, whenever $A_1 \subset A_2 \subset \cdots$ are subsets of $X$ and $A=\bigcup_{i=1}^\infty A_i$. We also say that a property holds $\mathcal{C}$-q.e.\ on $X$ if it holds for all $x \in X$ except those in a set $E$ with $\mathcal{C}(E)=0$.

The capacitary Lorentz spaces $L^{p,q}(\mathcal{C})$, $p, q>0$, are
defined by the condition $$\| f \|_{L^{p,q}(\mathcal{C})} := \Big(q \int_{0}^{\infty} t^{q-1} \mathcal{C}\left(\{z \in X: |f(z)|> t\}\right)^{{q}/{p}} dt \Big)^{{1}/{q}}<\oo,$$ when $q<\oo$, and in the case of $q=\infty$ by 
$$\| f \|_{L^{p,\infty}(\mathcal{C})} :=\sup_{t > 0 } t \, \mathcal{C}\left(\{z \in X: |f(z)| > t\}\right)^{{1}/{p}}<\oo.$$ The space $L^{p,\oo}(\mathcal{C})$ is called the weak capacitary $L^p$-space. For the general facts and properties of the capacitary Lorentz spaces, we refer to \cite{BS}, \cite{C} and \cite{CMS}.
Throughout the paper, we denote the characteristic function of a set $E\subset X$ by $\ch{E}$.  
In general, $C$ will denote a positive constant whose value is not
necessarily the same at each occurrence.  

\section{Riesz Capacity}
\begin{definition}
Let $1<p<\infty$ and $0<\gamma<1$. The Riesz $(\gamma,p)$-capacity of a set $E \subset X$ is the number
$$\mathcal{C}_{\gamma,p}(E)= \inf_{f \in \mathcal{A}(E)} ||f||^{p}_{L^p(X)},$$
where $$\mathcal{A}(E)= \left\{f \in L^p_{+}(X): I_\gamma f \geq 1 \text{ on } E \right\} .$$
\end{definition}
If $\mathcal{A}(E)= \emptyset$, we set $\mathcal{C}_{\gamma,p}(E)= \infty$. Functions belonging to $\mathcal{A}(E)$ are called admissible functions or test functions for $E$. From now on, we always assume in this section that $1<p<\infty$ and $0<\gamma<1$.

In the Euclidean space, with the Lebesgue measure, one can find the basic properties of the Riesz capacity for example in~\cite{AH}, \cite{AE}, \cite{Me} and \cite{Mi}. In the metric case, assuming only the doubling property from the Borel regular measure $\mu$, we begin by showing that the Riesz $(\gamma,p)$-capacity is an outer measure. This means that the Riesz capacity satisfies the properties of Definition 2.3.

\begin{theorem}\label{Nov16}
The Riesz $(\gamma,p)$-capacity is an outer measure.
\begin{proof}
Clearly $\mathcal{C}_{\gamma,p} (\emptyset)=0$, since $0$ is an admissible function. The definition of the capacity also implies monotonicity, since if $E_{1}\subset E_{2}$, then $\mathcal{A}(E_2) \subset \mathcal{A}(E_1) $. 

To prove the countable subadditivity, let $\left\{A_i\right\}_{i=1}^{\infty}$ be a sequence of sets in $X$ and let $A=\bigcup_{i=1}^{\infty}A_i$. We may assume that $\sum_{i=1}^{\infty} \mathcal{C}_{\gamma,p}(A_i)<\infty$. Then, $\mathcal{C}_{\gamma,p}(A_i)<\infty$ for all $i \in \mathbb{N}$. Let $\epsilon>0$, and for each $i \in \mathbb{N}$, let $f_i \in \mathcal{A}(A_i)$ be such that $$||f_i||_{L^p(X)}^p<\mathcal{C}_{\gamma,p}(A_i) + \epsilon 2^{-i}.$$ We define $f(x):= \underset{i \in \mathbb{N}}{\sup} f_i(x)$. We have that $f(x)^p \leq \sum_{i=1}^{\infty}f_i(x)^p$, which implies $$||f||_{L^p(X)}^p \leq \sum_{i=1}^{\infty}||f_i||_{L^p(X)}^{p} \leq \sum_{i=1}^{\infty}\left(\mathcal{C}_{\gamma,p}(A_i)+\epsilon 2^{-i}\right) = \sum_{i=1}^{\infty}\mathcal{C}_{\gamma,p}(A_i)+\epsilon.$$ Moreover, we have that $I_\gamma f(x) \geq I_\gamma f_i(x)$, since $f(x)\geq f_i(x)$ for all $x \in X$ and $i \in \mathbb{N}$. Let $x \in A$. Then there exists $j \in \mathbb{N}$ such that $x \in A_j$ and hence $I_\gamma f(x) \geq I_\gamma f_j(x) \geq 1$. Thus $f$ is an admissible function for $A=\bigcup_{i=1}^{\infty}A_i$. Now $$\mathcal{C}_{\gamma,p}\Big( \bigcup_{i=1}^{\infty}A_i\Big) \leq ||f||_{L^p(X)}^p \leq \sum_{i=1}^{\infty}\mathcal{C}_{\gamma,p}(A_i)+\epsilon,$$ and the claim follows by letting $\epsilon \rightarrow 0$.
\end{proof}
\end{theorem}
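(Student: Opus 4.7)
The plan is to verify the three axioms of a capacity (Definition 2.3) in order, since the first two are essentially automatic from the definition and only countable subadditivity requires real work.

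For property (a), the zero function $f \equiv 0$ belongs to $\mathcal{A}(\emptyset)$ vacuously (the condition $I_\gamma f \geq 1$ is imposed on no points), so $\mathcal{C}_{\gamma,p}(\emptyset) \leq \|0\|_{L^p(X)}^p = 0$. Property (b) is also immediate: if $E_1 \subset E_2$, then any function satisfying $I_\gamma f \geq 1$ on $E_2$ automatically satisfies it on $E_1$, giving $\mathcal{A}(E_2) \subset \mathcal{A}(E_1)$, and taking the infimum over a larger class produces a smaller number.

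The substantive step is (c). Given a sequence $\{A_i\}$ with union $A$, the standard reduction is to assume $\sum_i \mathcal{C}_{\gamma,p}(A_i) < \infty$ (otherwise there is nothing to prove), and then for each $\epsilon>0$ and each $i$ to pick $f_i \in \mathcal{A}(A_i)$ with $\|f_i\|_{L^p(X)}^p < \mathcal{C}_{\gamma,p}(A_i) + \epsilon 2^{-i}$, using the definition of the infimum. The task is to glue these into a single admissible function for $A$. The natural candidate is the pointwise supremum $f(x) := \sup_i f_i(x)$: since $f \geq f_i$ everywhere and the Riesz kernel is nonnegative, $I_\gamma f \geq I_\gamma f_i \geq 1$ on $A_i$, and ranging over $i$ gives $I_\gamma f \geq 1$ on $A$, so $f \in \mathcal{A}(A)$.

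The main obstacle is controlling $\|f\|_{L^p(X)}^p$ in terms of $\sum_i \|f_i\|_{L^p(X)}^p$, because $\sup$ does not interact as cleanly with $L^p$ norms as, say, a sum would. Here the saving trick is the elementary pointwise inequality $(\sup_i a_i)^p \leq \sum_i a_i^p$ valid for any $a_i \geq 0$ (since $p>0$ and the largest term already sits inside the sum). Integrating and using Tonelli yields $\|f\|_{L^p(X)}^p \leq \sum_i \|f_i\|_{L^p(X)}^p \leq \sum_i \mathcal{C}_{\gamma,p}(A_i) + \epsilon$. Letting $\epsilon \to 0$ gives the desired countable subadditivity. (A variant would be to take $f = (\sum_i f_i^p)^{1/p}$, which gives the same bound; the sum $\sum_i f_i$ itself would work only after normalization and is wasteful here.)
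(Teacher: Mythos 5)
Your proposal is correct and follows essentially the same route as the paper: the same verification of the first two axioms, and the same gluing via $f = \sup_i f_i$ together with the pointwise inequality $(\sup_i f_i)^p \le \sum_i f_i^p$ for the subadditivity. No substantive differences.
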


\begin{remark}
The Riesz potential, as defined in \eqref{our definition}, is lower semicontinuous. For our purposes, it is enough to prove the lower semicontinuity for functions $f \in L^p_{+}(X)$. Let $x_0 \in X$. Then
$$I_{\gamma} f(x_0) = \int_{X} \frac{f(y)}{\mu\left(B(x_0,d(x_0,y))\right)^{1-\gamma}} \, d\mu (y) .$$
We need to show that 
$$ I_{\gamma} f(x_0)  \leq \liminf_{x \to x_0}  I_{\gamma} f(x),$$ 
when $x \to x_0$. Let $x \in X$. Since for any $y \in X$
$$B(x, d(x,y))  \subset B(x_0, d(x,y) + d(x_0, x)),$$
we have by the monotonicity of $\mu$ that 
$$\mu\left(B(x, d(x,y))\right) \leq \mu\left(B(x_0, d(x,y) + d(x_0, x))\right).$$
The above inequality and equality \eqref{spheres} imply that
\begin{align*}
\limsup_{x \to x_0} \mu\left(B(x, d(x,y))\right)&\leq \lim_{x \to x_0}  \mu( B( x_0, d(x,y) + d(x_0, x))) \\
& =\mu\left(B(x_0, d(x_0, y))\right).
\end{align*}
Now, for any $y \in X$,
$$\frac{f(y)}{  \mu\left( B(x_0, d(x_0, y))\right)^{1-\gamma}} \leq \liminf_{x \to x_0 } \frac{f(y)}{\mu\left( B(x,d(x,y) ) \right)^{1-\gamma}}$$ and, by using Fatou's lemma, we get
\begin{align*}
I_{\gamma} f(x_0) &= \int_{X} \frac{f(y)}{\mu\left(B(x_0,d(x_0,y))\right)^{1-\gamma}} \, d\mu (y) \nonumber \\
&\leq \int_X \liminf_{x \to x_0 } \frac{f(y)}{\mu\left( B(x,d(x,y) ) \right)^{1-\gamma}} d \mu(y) \nonumber\\
&\leq \liminf_{x \to x_0 } \int_X  \frac{f(y)}{\mu\left( B(x,d(x,y) ) \right)^{1-\gamma}} d \mu(y) \nonumber\\
& = \liminf_{x \to x_0 } I_{\gamma}f(x).
\end{align*}

In addition, we get the following lower semicontinuity property of the Riesz potential as an operator
$$ I_{\gamma} f \leq \liminf_{i \to \oo} I_{\g}f_i,$$ when $f_i \rightarrow f$ weakly in $L^p_{+}(X)$. The weak convergence implies that $f_i \mu \rightarrow f \mu$ converge weakly as measures with the vague topology of \cite[Section 1.1]{Fu}. Also, because of equality \eqref{spheres} in the previous section, we have that our Riesz kernel is continuous and hence lower semicontinuous. The result then follows from \cite[Lemma 2.2.1.b)]{Fu} and \cite[Theorem 1.2. p.58]{Mi}.
\end{remark}
Using the fact that the Riesz potential of a function $f$ is lower semicontinuous, we show that the Riesz capacity is an outer capacity. This means that the capacity of a set $E \subset X$ can be obtained by approximating with open sets from the outside.

\bt\label{t3}
$\mathcal{C}_{\gamma,p}$ is an outer capacity, that is, $$\mathcal{C}_{\gamma,p}(E)=\inf\left\{\mathcal{C}_{\gamma,p}(O): O \supset E, \, O \text{ open}\right\}.$$
\begin{proof}
By the monotonicity, $\mathcal{C}_{\gamma,p}(E)\leq \inf\left\{\mathcal{C}_{\gamma,p}(O): O \supset E, \, O \text{ open}\right\}$. To prove the inequality to the reverse direction, we may assume that $\mathcal{C}_{\gamma,p}(E)<\infty$. Let $0<\epsilon<1$ and let $f \in L^p_{+}(X)$ be a function such that $I_\gamma f \geq 1$ on $E$ and $$||f||_{L^p(X)}^p< \mathcal{C}_{\gamma,p}(E) + \epsilon.$$ We define $$f_\epsilon := \frac{1}{1-\epsilon} \, f$$ and $$G:= \left\{x \in X: I_\gamma f_\epsilon (x)>1\right\}.$$ Since $I_\gamma f_\epsilon$ is lower semicontinuous, $G$ is an open set. We also have that $f_\epsilon (x) > f (x)$ for all $x \in X$, since $0<\epsilon<1$. Now, if $x \in E$, then $I_\gamma f(x) \geq 1$ and hence $I_\gamma f_\epsilon (x) >1$. Thus we have that $x \in G$ and $E \subset G$. Moreover, $f_\epsilon$ is admissible for $\mathcal{C}_{\gamma,p}(G)$ and 
\begin{align*}
\mathcal{C}_{\gamma,p}(G) &\leq ||f_\epsilon||_{L^p(X)}^p= \Big(\frac{1}{1-\epsilon}\Big)^p ||f||_{L^p(X)}^p \\
&\leq \mathcal{C}_{\gamma,p}(E) (1-\epsilon)^{-p} + \epsilon (1-\epsilon)^{-p}.
\end{align*}
Since we have that $\inf\left\{\mathcal{C}_{\gamma,p}(O): O \supset E, \, O \text{ open}\right\}\leq \mathcal{C}_{\gamma,p}(G)$, letting $\epsilon \rightarrow 0$ yields the inequality to the other direction.
\end{proof}
\et

The next capacitary weak type lemma shows in particular that the Riesz potential $I_\gamma f$ of a nonnegative $L^p$-function $f$ belongs to the weak capacitary $L^p$-space.

\begin{lemma}
If $f \in L^p_{+}(X)$, then the capacitary weak type estimate $$\mathcal{C}_{\gamma,p}\left(\left\{x \in X : I_\gamma f(x) > a\right\}\right) \leq a^{-p} ||f||_{L^p(X)}^p$$ holds for each $0<a<\infty$. Moreover, $I_{\g} $ is bounded from $L^p_{+}(X)$ to $L^{p,\oo}(\mathcal{C}_{\g,p}).$
\end{lemma}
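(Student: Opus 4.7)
The plan is to produce an explicit admissible test function for the super-level set and read off both statements directly from the definition of $\mathcal{C}_{\gamma,p}$ and $L^{p,\infty}(\mathcal{C}_{\gamma,p})$.

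First, fix $a \in (0,\infty)$ and set $E_a = \{x \in X : I_\gamma f(x) > a\}$. By Remark 3.3, $I_\gamma f$ is lower semicontinuous, so $E_a$ is open and in particular $\mu$-measurable, hence $\mathcal{C}_{\gamma,p}(E_a)$ is well defined. The natural candidate test function is $g := f/a$, which lies in $L^p_+(X)$ because $f$ does. By linearity of the integral defining $I_\gamma$, we have $I_\gamma g = a^{-1} I_\gamma f$, and on $E_a$ this gives $I_\gamma g > 1$, so in particular $I_\gamma g \geq 1$ on $E_a$. Thus $g \in \mathcal{A}(E_a)$.

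Plugging $g$ into the definition of capacity yields
\begin{equation*}
\mathcal{C}_{\gamma,p}(E_a) \leq \|g\|_{L^p(X)}^p = a^{-p} \|f\|_{L^p(X)}^p,
\end{equation*}
which is the first assertion. (If $\|f\|_{L^p(X)} = \infty$ the estimate is trivial, and if $\|f\|_{L^p(X)} < \infty$ then $g$ is indeed an admissible $L^p_+$ function, so the argument is in no way delicate.)

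For the second assertion, apply the just-obtained estimate inside the definition of the weak capacitary norm from Section 2. Taking the $1/p$-th power of the bound gives $\mathcal{C}_{\gamma,p}(E_a)^{1/p} \leq a^{-1} \|f\|_{L^p(X)}$, and hence
\begin{equation*}
\|I_\gamma f\|_{L^{p,\infty}(\mathcal{C}_{\gamma,p})} = \sup_{a>0} a\, \mathcal{C}_{\gamma,p}(E_a)^{1/p} \leq \|f\|_{L^p(X)},
\end{equation*}
showing that $I_\gamma\colon L^p_+(X)\to L^{p,\infty}(\mathcal{C}_{\gamma,p})$ is bounded (with operator norm at most $1$).

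There is no genuine obstacle here; the only point that needs a moment's attention is that $E_a$ should be a legitimate argument for $\mathcal{C}_{\gamma,p}$, which is precisely why the lower semicontinuity noted in Remark 3.3 is useful. Everything else is a direct rescaling argument.
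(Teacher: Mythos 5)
Your proof is correct and follows essentially the same route as the paper: take the rescaled function $f/a$ as an admissible test function for the super-level set and read both claims off the definitions. The only difference is your aside about $E_a$ being open via lower semicontinuity, which is harmless but unnecessary since $\mathcal{C}_{\gamma,p}$ is defined for arbitrary subsets of $X$.
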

\begin{proof}
Let $f \in L^p_{+}(X)$ and $0<a<\infty$. We define $$f_a:= \frac{f}{a}$$ and $$F:= \left\{x \in X: I_\gamma f(x) > a\right\}.$$ Since $I_\gamma f_a = I_\gamma \big(\frac{f}{a}\big) \geq 1$ on $F$, $f_a$ is admissible for $F$ and $$\mathcal{C}_{\gamma,p}(F) \leq ||f_a||_{L^p(X)}^p= a^{-p}||f||_{L^p(X)}^{p}.$$
Moreover, the capacitary weak type estimate 
implies that
\beqy
 \| I_{\g } f\|_{L^{p,\oo}(\mathcal{C}_{\g,p})} =\sup_{t >0} t \, \mathcal{C}_{\gamma,p}\left(\left\{x \in X : I_\gamma f (x) >  t \right\}\right)^{1/p} \leq  ||f||_{L^p(X)},\eeqy
and the second claim follows.
\end{proof}

We use the capacitary weak type estimate to prove the next theorem, which in particular says that the Riesz potential of a function $f \in L^p_{+}(X)$ is finite $\mathcal{C}_{\gamma,p}$- q.e. It follows that $I_{\gamma} f$, for $f \in L^p(X)$, is well-defined $\mathcal{C}_{\gamma,p}$- q.e. and that the Riesz potential, as an operator, is linear outside a set of capacity zero.
\bt\label{t4}
Let $E \subset X$. Then $\mathcal{C}_{\gamma,p}(E)=0$ if and only if there exists $f \in L^p_{+}(X)$ such that $I_\gamma f(x)=\infty$ for all $x \in E$.
\begin{proof}
If $\mathcal{C}_{\gamma,p}(E)=0$, then for any integer $j$, we can find an admissible function $f_j \in \mathcal{A}(E)$ such that $$||f_j||_{L^p(X)}^p= \int_{X} f_j(y)^p d\mu(y) < 2^{-j}.$$ Then the function $f:= \sum_{j=1}^{\infty} f_j$ belongs to $L^p(X)$ and $$I_\gamma f(x)= \int_{X} \frac{\sum_{j=1}^{\infty}f_j(y)}{\mu\left(B(x,d(x,y))\right)^{1-\gamma}} \, d\mu (y) = \sum_{j=1}^{\infty} I_\gamma f_j(x)= \infty$$ for all $x \in E$, since $I_\gamma f_j \geq 1$ on $E$ for each $j$. 

Conversely, if there exists a nonnegative function $f \in L^p(X)$ such that $I_\gamma f= \infty$ on $E$, then by the capacitary weak type estimate $$\mathcal{C}_{\gamma,p}(E) \leq \mathcal{C}_{\gamma,p}\left(\left\{x \in X: I_\gamma f(x) > a\right\}\right) \leq a^{-p} ||f||^p_{L^p(X)}$$ for every $a>0$. By letting $a \rightarrow \infty$, we see that $\mathcal{C}_{\gamma,p}(E)=0$.
\end{proof}
\et

\begin{corollary}\label{t5} 
Let $f_1$, $f_2$, $f \in L^p(X)$. Then 
$$I_\g (f_1+f_2) = I_\g (f_1)+ I_\g(f_2), \qquad \mathcal{C}_{\g,p}\text{-q.e.}$$
and
$$I_\g (a f) = a I_\g(f), \qquad \mathcal{C}_{\g, p}\text{-q.e.},$$
where $a$ is any finite constant.
\begin{proof} 
If each term on the right side of the above equalities is finite at a point $x \in X$, then the equalities hold at such point by the definition of the Riesz potential. By Theorem~\ref{t4}, the sets where the equalities can fail are of capacity zero. 
\end{proof}
\end{corollary}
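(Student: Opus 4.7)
The plan is to reduce everything to the nonnegative case, where the Riesz potential is defined unambiguously by its integral formula, and then invoke Theorem~\ref{t4} together with countable subadditivity to control the exceptional sets. Writing $f_j = f_j^+ - f_j^-$ and $f = f^+ - f^-$, each of $f_j^\pm$ and $f^\pm$ belongs to $L^p_+(X)$. The definition $I_\gamma f := I_\gamma f^+ - I_\gamma f^-$ only makes sense at points where both $I_\gamma f^+$ and $I_\gamma f^-$ are finite, so the first order of business is to observe that this ambiguity is confined to a set of capacity zero.

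For this I would use Theorem~\ref{t4} in the following form: if $g \in L^p_+(X)$, then applying the \textbf{if} direction of Theorem~\ref{t4} with $E := \{x \in X : I_\gamma g(x) = \infty\}$ and test function $g$ itself (which satisfies $I_\gamma g = \infty$ on $E$ trivially) shows $\mathcal{C}_{\gamma,p}(E) = 0$. Applied to each of $f_1^+, f_1^-, f_2^+, f_2^-$ (for the additivity statement) and to $f^+, f^-$ (for the homogeneity statement), this produces finitely many sets of capacity zero.

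Next I would assemble the exceptional set. Let $N$ be the union of these exceptional sets; by the countable (in fact finite) subadditivity established in Theorem~\ref{Nov16}, $\mathcal{C}_{\gamma,p}(N) = 0$. Outside $N$, every one of the Riesz potentials $I_\gamma f_j^\pm$, $I_\gamma f^\pm$ is finite, hence the difference defining $I_\gamma f_j$ and $I_\gamma f$ is meaningful there. At such a point $x$, since $(f_1 + f_2)^\pm \leq f_1^\pm + f_2^\pm$ pointwise and the latter have finite Riesz potential at $x$, all the relevant integrals converge absolutely, and the linearity of the Lebesgue integral applied to the integrand
\[
\frac{f_1(y) + f_2(y)}{\mu(B(x,d(x,y)))^{1-\gamma}} = \frac{f_1(y)}{\mu(B(x,d(x,y)))^{1-\gamma}} + \frac{f_2(y)}{\mu(B(x,d(x,y)))^{1-\gamma}}
\]
yields $I_\gamma(f_1+f_2)(x) = I_\gamma f_1(x) + I_\gamma f_2(x)$. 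The homogeneity identity $I_\gamma(af)(x) = a\,I_\gamma f(x)$ is handled identically, splitting into the cases $a \geq 0$ and $a < 0$ so as to keep $(af)^\pm$ a nonnegative multiple of $f^\pm$ or $f^\mp$.

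There is no serious obstacle here; the only point requiring care is bookkeeping the exceptional set, i.e., making sure that the finitely many capacity-zero sets produced by Theorem~\ref{t4} are unioned before any linearity calculation is attempted. Once $N$ is fixed and $x \notin N$, the identities are simple manipulations of finite integrals.
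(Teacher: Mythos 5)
Your argument is correct and follows the same route as the paper: the paper's one-line proof also rests on the ``if'' direction of Theorem~\ref{t4} applied to the positive and negative parts of the functions to see that each $I_\gamma f_j^{\pm}$ is finite outside a capacity-zero set, after which the identities are just linearity of the integral at points of absolute convergence. Your version merely makes explicit the bookkeeping (the finite union of exceptional sets via Theorem~\ref{Nov16} and the case split on the sign of $a$) that the paper leaves implicit.
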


Next, we are going to prove several convergence results. We start by defining the convergence of a sequence of functions in capacity.

\begin{definition}\label{int16} We say that a sequence $\{ f_i\}$ converges in capacity to $f$, denoted $f_i \rightarrow f$ in $\mathcal{C}_{\g, p}$, if for every $\epsilon >0$ $$\lim_{i \rightarrow \infty} \mathcal{C}_{\g, p}\left(\{x \in X: |f_i(x) - f(x)| > \epsilon \}\right)= 0.$$ \end{definition}

We show that the $L^p$-convergence of functions implies that the corresponding sequence of the Riesz potentials converges in capacity. Also, for a subsequence, we have pointwise convergence except for a set of capacity zero.

\begin{theorem}\label{t7} Let $\left\{f_i\right\} \subset L^p_{+}(X)$ and $f \in L^p_{+}(X)$. Each of following statements is a consequence of the previous one. 
\bit
\item[(i)] $f_i \rightarrow f$ in $L^p(X)$
\item[(ii)] $I_{\g} f_i \rightarrow I_{\g} f$ in $\mathcal{C}_{\g, p}$
\item[(iii)] There exists a subsequence $\{f_{i_j}\}$ of $\{f_i\}$ such that 
$$I_{\g} f_{i_j} \rightarrow I_{\g} f \, \text{ pointwise } \, \mathcal{C}_{\g,p}\text{-q.e.}$$
\eit
\begin{proof} We show first that (i) implies (ii). Let $\epsilon >0$. By Theorem 3.6, the potentials $I_{\g} f_i $ and $ I_{\g} f$ are finite $\mathcal{C}_{\g, p}$-q.e. Then, we have by Corollary~\ref{t5} and by Lemma 3.5 that 
$$\mathcal{C}_{\g,p}\left(\left\{x \in X : | I_\gamma f_i(x)  - I_{\g} f (x) | > \e \right\}\right) \leq \e^{-p} ||f_i - f ||_{L^p(X)}^p,$$
which proves the claim. 

Next, we assume that (ii) holds and show that it implies (iii). Let $\e=2^{-j}$. Then, there exists a subsequence $\{f_{i_j}\}$ such that $$\mathcal{C}_{\g,p}\left(\left\{x \in X : | I_\gamma (f_{i_j})(x)  - I_{\g} f (x) | > 2^{-j} \right\}\right) < 2^{-j}.$$ We use the notation $A_j=\left\{x \in X : | I_\gamma (f_{i_j})(x)  - I_{\g} f (x) | > 2^{-j} \right\}$. The upper limit set $$A=\bigcap_{k=1}^{\infty}\bigcup_{j=k}^{\infty} A_j$$ has zero capacity, since $$\mathcal{C}_{\gamma,p}(A) \leq \sum_{j=k}^{\infty}\mathcal{C}_{\gamma,p}(A_j) \leq \sum_{j=k}^{\infty} 2^{-j}$$ for all $k$. Now, if $x \in X \setminus A$ then there exists $k=k(x)$ such that $x \in X \setminus A_j$ for $j \geq k$, that is $$| I_\gamma (f_{i_j})(x)  - I_{\g} f (x) | \leq 2^{-j}.$$ This implies that $I_\gamma (f_{i_j})(x) \rightarrow I_{\g} f (x)$, as $j \rightarrow \infty$, which proves the claim.


\end{proof}
\end{theorem}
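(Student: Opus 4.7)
The plan is to exploit the capacitary weak type estimate of Lemma 3.5 twice: once to pass from $L^p$-convergence to convergence in capacity, and a second time, combined with a Borel--Cantelli style argument using countable subadditivity, to extract a pointwise $\mathcal{C}_{\gamma,p}$-q.e.\ convergent subsequence.

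For (i) $\Rightarrow$ (ii), I would fix $\epsilon>0$ and use the linearity of $I_\gamma$ outside a set of capacity zero (Corollary~\ref{t5}) to write, away from a $\mathcal{C}_{\gamma,p}$-null set, the identity $I_\gamma f_i - I_\gamma f = I_\gamma((f_i-f)_+) - I_\gamma((f_i-f)_-)$. From this I get the pointwise bound
$$|I_\gamma f_i(x) - I_\gamma f(x)| \leq I_\gamma |f_i - f|(x) \qquad \mathcal{C}_{\gamma,p}\text{-q.e.}$$
Since $|f_i-f|\in L^p_+(X)$, applying Lemma~3.5 to this non-negative function gives
$$\mathcal{C}_{\gamma,p}\left(\{x \in X : |I_\gamma f_i(x) - I_\gamma f(x)| > \epsilon\}\right) \leq \epsilon^{-p}\|f_i-f\|_{L^p(X)}^p,$$
which tends to $0$ as $i\to\infty$ by the assumed $L^p$-convergence.

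For (ii) $\Rightarrow$ (iii), I would iterate the convergence in capacity to build a subsequence: for each $j\in\mathbb{N}$ there exists an index $i_j$ (which may be chosen strictly increasing) such that
$$A_j := \left\{x \in X : |I_\gamma f_{i_j}(x) - I_\gamma f(x)| > 2^{-j}\right\}$$
has $\mathcal{C}_{\gamma,p}(A_j) < 2^{-j}$. Define the $\limsup$ set $A := \bigcap_{k=1}^\infty \bigcup_{j=k}^\infty A_j$. Countable subadditivity (Theorem~\ref{Nov16}) gives $\mathcal{C}_{\gamma,p}(A)\leq \sum_{j=k}^\infty 2^{-j} = 2^{1-k}$ for every $k$, so $\mathcal{C}_{\gamma,p}(A)=0$. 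For any $x \notin A$ there exists $k(x)$ such that $x\notin A_j$ for every $j\geq k(x)$, whence $|I_\gamma f_{i_j}(x) - I_\gamma f(x)|\leq 2^{-j}\to 0$, giving the desired pointwise convergence.

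The only genuine subtlety lies in the first implication: Lemma~3.5 is stated for non-negative functions, whereas $f_i - f$ can change sign. I expect this to be the main point to handle carefully, and it is resolved by decomposing into positive and negative parts and invoking the quasi-everywhere linearity of $I_\gamma$ from Corollary~\ref{t5}, at the cost of discarding an additional $\mathcal{C}_{\gamma,p}$-null set, which is harmless for the conclusion.
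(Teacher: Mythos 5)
Your proposal is correct and follows essentially the same route as the paper: the first implication is obtained from the quasi-everywhere linearity of $I_\gamma$ (Corollary~3.7) together with the capacitary weak type estimate applied to $|f_i-f|$, and the second is the standard Borel--Cantelli argument with countable subadditivity. In fact you spell out the positive/negative-part decomposition behind the bound $|I_\gamma f_i - I_\gamma f|\le I_\gamma|f_i-f|$ q.e.\ more explicitly than the paper does.
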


By using the above theorem, we can strengthen the lower semicontinuity property of $I_\gamma$ from Remark 3.3, at least outside a set of capacity zero.

\begin{theorem}\label{t6} Let $\left\{f_i\right\} \subset L^p(X)$ and $f \in L^p(X)$.
\bit \item[(i)] If $f_i \rightarrow f$ weakly in $L^p(X)$, then 
$$ \liminf_{i \to \oo} I_{\g} f_i \leq I_{\g} f \leq \limsup_{i \to \oo} I_{\g} f_i, \qquad \mathcal{C}_{\g, p}\text{-q.e.}$$
\item[(ii)] If $f_i \rightarrow f$ weakly in $L^p_{+}(X)$, then 
$$ I_{\g} f \leq \liminf_{i \to \oo} I_{\g} f_i \, \text{ everywhere }$$
and
$$ I_{\g} f = \liminf_{i \to \oo} I_{\g} f_i \qquad \mathcal{C}_{\g, p}\text{-q.e.}$$
\eit
\begin{proof}  
We prove first the claim (i). By the Banach-Saks Theorem (see \cite{BaSa}), there exists a subsequence $\{f_i'\}$ such that a sequence $\{g_j\}$, where
$$ g_j = j^{-1} \sum_{i=1}^{j} f_i',$$ converges to $f$ in $L^p(X)$. Then, by Theorem~\ref{t7}, there exists a subsequence $\{g_j'\}$ such that $$ I_{\g} f = \lim_{j \to \oo} I_{\g} g_j'\qquad \mathcal{C}_{\g,p}\text{-q.e.}$$
Then, the left inequality in (i) follows due to the fact that
$$ I_{\g} f= \lim_{j \to \infty} I_{\g} g_j' \geq \liminf_{i \to \infty} I_{\g} f_i' \geq \liminf_{i \to \infty} I_{\g} f_i\, \qquad \mathcal{C}_{\g, p}\text{-q.e.}$$
The right inequality in (i) follows by replacing $f_i $ and $f$ by $-f_i$ and $-f$ in the previous argument. 

If $f_i \rightarrow f$ weakly in $L^p_{+}(X)$, then by the lower semicontinuity of $I_{\g} (\cdot)$ (see Remark 3.3), we have that 
$$ I_{\g} f \leq \liminf_{i \to \oo} I_{\g}f_i \, \, \text{ everywhere}$$
and it follows by (i) that
$$ I_{\g} f =  \liminf_{i \to \oo} I_{\g}f_i \qquad \mathcal{C}_{\g, p}\text{-q.e.}$$
\end{proof}
\end{theorem}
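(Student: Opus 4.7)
The plan is to reduce weak $L^p$-convergence to strong $L^p$-convergence via the Banach--Saks theorem, then push the conclusion through Theorem~\ref{t7} and the quasi-everywhere linearity of $I_\gamma$ from Corollary~\ref{t5}.

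For part (i), apply Banach--Saks to the weakly convergent sequence $\{f_i\}$ to obtain a subsequence $\{f_i'\}$ whose Cesàro averages $g_j = j^{-1}\sum_{i=1}^{j} f_i'$ converge strongly in $L^p(X)$ to $f$. Theorem~\ref{t7} then yields a sub-subsequence $\{g_j'\}$ along which $I_\gamma g_j' \to I_\gamma f$ pointwise $\mathcal{C}_{\gamma,p}$-q.e. By Corollary~\ref{t5}, and since a countable union of capacity-zero sets has capacity zero, the identity $I_\gamma g_j(x) = j^{-1}\sum_{i=1}^{j} I_\gamma f_i'(x)$ holds for every $j$ at quasi-every $x$. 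So at such an $x$, the number $I_\gamma f(x)$ is a limit of Cesàro means (along a subsequence) of the real sequence $\{I_\gamma f_i'(x)\}_i$. The elementary real-analysis fact
$$\liminf_{j \to \infty} \frac{1}{j}\sum_{i=1}^{j} a_i \;\geq\; \liminf_{i \to \infty} a_i,$$
valid for arbitrary real sequences via a standard $\varepsilon/N$-splitting, combined with the monotone behavior of $\liminf$ under passage to a subsequence, gives
$$ I_\gamma f(x) \;=\; \lim_{j \to \infty} I_\gamma g_j'(x) \;\geq\; \liminf_{i \to \infty} I_\gamma f_i'(x) \;\geq\; \liminf_{i \to \infty} I_\gamma f_i(x), $$
which is the left-hand inequality of (i). The right-hand inequality follows by applying the same reasoning to $-f_i$ and $-f$, which still converge weakly in $L^p(X)$.

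For part (ii), the pointwise \emph{everywhere} inequality $I_\gamma f \leq \liminf_i I_\gamma f_i$ is precisely the operator lower semicontinuity under weak convergence in $L^p_+$ already recorded in Remark~3.3 (via vague convergence of $f_i\,\mu$ to $f\,\mu$ together with lower semicontinuity of the Riesz kernel). Combining this everywhere bound with the left-hand inequality from part (i) immediately yields $I_\gamma f = \liminf_i I_\gamma f_i$ $\mathcal{C}_{\gamma,p}$-q.e.

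The main obstacle is the bookkeeping of two nested subsequences (Banach--Saks first, then Theorem~\ref{t7}) together with countably many capacity-zero exceptional sets arising from each application of Corollary~\ref{t5} and from Theorem~\ref{t7}. None of these cause real difficulty, because countable subadditivity of $\mathcal{C}_{\gamma,p}$ keeps their union negligible; but care is required to ensure the pointwise Cesàro-mean inequality is invoked at a single good point after all these sets have been discarded, so that every displayed inequality holds simultaneously there.
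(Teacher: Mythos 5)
Your proposal is correct and follows essentially the same route as the paper: Banach--Saks to pass from weak to strong convergence of Ces\`aro means, Theorem~\ref{t7} for a sub-subsequence converging $\mathcal{C}_{\gamma,p}$-q.e., the elementary liminf inequality for Ces\`aro means (justified via Corollary~\ref{t5}), negation for the right-hand inequality, and Remark~3.3 combined with (i) for part (ii). Your extra care with the quasi-everywhere linearity and the countable union of exceptional sets only makes explicit what the paper leaves implicit.
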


We prove two more convergence results for the Riesz capacity. As a corollary of Theorem 3.12, we get a lower semicontinuity property for the capacity that is an analogue of Fatou's lemma.

\begin{theorem}\label{t1}
If $X \supset K_1 \supset K_2 \cdots$ are compact sets and $K= \bigcap_{i=1}^{\infty} K_j$, then $$\lim_{i\rightarrow \infty} \mathcal{C}_{\gamma,p} (K_i)= \mathcal{C}_{\gamma,p}(K).$$
\begin{proof}
Clearly, by the monotonicity, $\lim_{j\rightarrow \infty} \mathcal{C}_{\gamma,p} (K_j) \geq \mathcal{C}_{\gamma,p}(K)$. On the other hand, let $O$ be an open set containing $K$. By the compactness of $K$, we have that $K_j \subset O$ for all sufficiently large $j$. Then $\lim_{j\rightarrow \infty} \mathcal{C}_{\gamma,p} (K_j) \leq \mathcal{C}_{\gamma,p}(O)$. 
Finally, since $\mathcal{C}_{\gamma,p}$ is an outer capacity by Theorem~\ref{t3},
$$\lim_{j\rightarrow \infty} \mathcal{C}_{\gamma,p} (K_j) \leq \inf\left\{\mathcal{C}_{\gamma,p}(O): O \supset K, \, O \text{ open}\right\} = \mathcal{C}_{\g, p}(K).$$
\end{proof}
\end{theorem}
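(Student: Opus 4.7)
The plan is to establish both inequalities in the claimed limit separately. The easy direction follows at once from monotonicity: since $K \subset K_j$ for every $j$, property (b) of Definition 2.3 (already verified for $\mathcal{C}_{\gamma,p}$ in Theorem~\ref{Nov16}) gives $\mathcal{C}_{\gamma,p}(K)\le \mathcal{C}_{\gamma,p}(K_j)$ for all $j$, and since the sequence $\{\mathcal{C}_{\gamma,p}(K_j)\}$ is monotone (decreasing, again by monotonicity applied to $K_j\supset K_{j+1}$), passing to the limit yields $\mathcal{C}_{\gamma,p}(K)\le \lim_{j\to\infty}\mathcal{C}_{\gamma,p}(K_j)$.

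For the reverse inequality, the idea is to use the outer capacity property from Theorem~\ref{t3}. I would fix an arbitrary open set $O\supset K$ and argue that $K_j\subset O$ for all sufficiently large $j$. The justification is the standard compactness fact: the sets $K_j\setminus O$ form a decreasing sequence of compact subsets of $K_1$ whose intersection equals $K\setminus O=\emptyset$, so the finite intersection property forces $K_{j_0}\setminus O=\emptyset$ for some $j_0$, and then $K_j\subset O$ for every $j\ge j_0$. Applying monotonicity once more gives $\mathcal{C}_{\gamma,p}(K_j)\le \mathcal{C}_{\gamma,p}(O)$ for all $j\ge j_0$, hence $\lim_{j\to\infty}\mathcal{C}_{\gamma,p}(K_j)\le \mathcal{C}_{\gamma,p}(O)$.

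To finish, I would take the infimum over all open $O\supset K$ on the right and invoke Theorem~\ref{t3} to identify this infimum with $\mathcal{C}_{\gamma,p}(K)$, yielding $\lim_{j\to\infty}\mathcal{C}_{\gamma,p}(K_j)\le \mathcal{C}_{\gamma,p}(K)$, which together with the first step completes the proof. There is no real obstacle here: the argument is a routine combination of monotonicity, the outer capacity property already proved, and the finite intersection property for nested compact sets. The only mild subtlety is noting that the local compactness of $X$ (which is assumed in Section 2.1) is what allows compact sets like $K_j\setminus O$ to be well-behaved, but since each $K_j$ is compact by assumption, the topological step uses only compactness of $K_1$ and needs no further hypothesis on $X$.
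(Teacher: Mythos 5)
Your proof is correct and follows essentially the same route as the paper's: the easy inequality by monotonicity, and the reverse inequality by showing $K_j\subset O$ for large $j$ via compactness and then invoking the outer capacity property of Theorem~\ref{t3}. Your spelled-out finite intersection property argument for the topological step is just a more explicit version of what the paper states in one line.
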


\begin{theorem}\label{t2}
If $A_1 \subset A_2 \subset \cdots$ are subsets of $X$ and $A=\bigcup_{i=1}^\infty A_i$, then 
$$\lim_{i\rightarrow \infty} \mathcal{C}_{\gamma,p} (A_i)= \mathcal{C}_{\gamma,p}(A),$$
that is, the Riesz capacity $\mathcal{C}_{\g,p}$ is a Fatou capacity.
\begin{proof}
We may assume that $\lim_{i\rightarrow \infty} \mathcal{C}_{\g,p}(A_i)=l<\infty$. Let $f_i \geq 0$ be a test function for 
$\mathcal{C}_{\g,p}(A_i)$ such that 
\beq\label{eq1} \|f_i\|_{L^p(X)}^p \leq \mathcal{C}_{\g,p}(A_i ) + \frac{1}{i}.\eeq
Then, the sequence $\{f_i\}$ is bounded in $L^p(X)$ and there exists a subsequence $\{f_{i_j}\}$ that converges weakly to a function $f \in L^p_{+}(X)$. By Theorem~\ref{t6} (ii), we have that 
$$ I_{\g} f \geq 1\, \text{ on } A_i\, \qquad \mathcal{C}_{\g,p}\text{-q.e.}$$
and hence 
$$I_{\g} f \geq 1\, \text{ on } A\, \qquad \mathcal{C}_{\g,p} \text{-q.e.}$$
Let $E$ be the subset of $A$, where the previous inequality holds. Then, by (\ref{eq1}) and the weak convergence of the functions 
$$ \mathcal{C}_{\g,p}(A) = \mathcal{C}_{\g,p}(E) \leq \| f\|_{L^p(X)}^p \leq \liminf_{j \rightarrow \infty} ||f_{i_j}||_{L^p(X)}^p \leq l,$$
from which the result follows. Here, we also used the the lower semicontinuity of $|| \cdot ||_p^p$ (see \cite[Lemma 3.1.2. p.109]{AE}).
\end{proof}
\end{theorem}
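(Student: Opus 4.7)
The plan is to prove the two inequalities separately. The direction $\lim_{i\to\infty} \mathcal{C}_{\gamma,p}(A_i) \le \mathcal{C}_{\gamma,p}(A)$ is immediate from monotonicity, so the real work is in the reverse inequality. For that, I would reduce to the case $l := \lim_{i\to\infty}\mathcal{C}_{\gamma,p}(A_i) < \infty$ and, for each $i$, pick a near-optimal test function $f_i \in \mathcal{A}(A_i)$ with $\|f_i\|_{L^p(X)}^p \le \mathcal{C}_{\gamma,p}(A_i) + 1/i$. Then $\{f_i\}$ is bounded in $L^p_+(X)$; since $1 < p < \infty$, $L^p(X)$ is reflexive, so some subsequence $\{f_{i_j}\}$ converges weakly to some $f \in L^p(X)$, and one can arrange $f \ge 0$ because the nonnegative cone is convex and norm-closed, hence weakly closed.

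The central step is to show that $f$ is, up to a set of capacity zero, admissible for $A$. Fix $x \in A$. Since $\{A_i\}$ is increasing and $A = \bigcup_i A_i$, there is some $i_0$ with $x \in A_i$ for all $i \ge i_0$; in particular $I_\gamma f_{i_j}(x) \ge 1$ for all $j$ large enough, so $\liminf_{j \to \infty} I_\gamma f_{i_j}(x) \ge 1$ on the whole set $A$. Applying Theorem~\ref{t6}(ii) to the weakly convergent subsequence gives $I_\gamma f = \liminf_{j\to\infty} I_\gamma f_{i_j}$ $\mathcal{C}_{\gamma,p}$-q.e., so $I_\gamma f \ge 1$ $\mathcal{C}_{\gamma,p}$-q.e. on $A$. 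Let $E \subset A$ be the set where this holds; then $\mathcal{C}_{\gamma,p}(A \setminus E) = 0$, and by countable subadditivity (Theorem~\ref{Nov16}) together with the admissibility of $f$ for $E$, we get $\mathcal{C}_{\gamma,p}(A) \le \mathcal{C}_{\gamma,p}(E) \le \|f\|_{L^p(X)}^p$.

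The proof is then finished by the weak lower semicontinuity of the $L^p$-norm: $\|f\|_{L^p(X)}^p \le \liminf_{j\to\infty}\|f_{i_j}\|_{L^p(X)}^p \le \liminf_{j\to\infty}\bigl(\mathcal{C}_{\gamma,p}(A_{i_j}) + 1/i_j\bigr) = l$. The main obstacle I anticipate is the step reconciling the \emph{pointwise} estimate $\liminf I_\gamma f_{i_j} \ge 1$ on $A$ with the conclusion $I_\gamma f \ge 1$; the everywhere inequality $I_\gamma f \le \liminf I_\gamma f_{i_j}$ from Remark 3.3 points the wrong way, and it is crucial to invoke the sharper q.e.\ \emph{equality} of Theorem~\ref{t6}(ii) and then absorb the resulting exceptional set into the capacity via countable subadditivity.
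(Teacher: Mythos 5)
Your proposal is correct and follows essentially the same route as the paper: extract a weakly convergent subsequence of near-optimal test functions, use Theorem~\ref{t6}(ii) to get $I_\gamma f \ge 1$ $\mathcal{C}_{\gamma,p}$-q.e.\ on $A$, absorb the exceptional set by subadditivity, and conclude by weak lower semicontinuity of the $L^p$-norm. Your write-up is in fact slightly more explicit than the paper's at the key step, spelling out why $\liminf_j I_\gamma f_{i_j} \ge 1$ pointwise on all of $A$ and why the q.e.\ equality (rather than the everywhere inequality of Remark 3.3, which points the wrong way) is what is needed.
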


\begin{corollary}
If $\{A_i\}_{i=1}^{\oo}$ is a sequence of sets in $X$, then 
$$ \mathcal{C}_{\g,p}(\liminf_{i \to \oo} A_i) \leq \liminf_{i \to \oo} \mathcal{C}_{\g,p}(A_i).$$
\begin{proof}
Let $S:= \liminf_{i \to \infty} A_i = \bigcup_{j} \bigcap_{k \geq j} A_k$ and $S_i :=\bigcup_{j=1}^i \bigcap_{k \geq j} A_k. $ Then $S_i \subset S_{i+1} \subset \cdots$ and $S=\bigcup_{i=1}^\infty S_i$, and by Theorem~\ref{t2}, 
$$ \mathcal{C}_{\g,p}(S) = \lim_{i \to \oo} \mathcal{C}_{\g,p}(S_i)  \leq \liminf_{i \to \oo} \mathcal{C}_{\g,p}(A_i).$$
\end{proof}
\end{corollary}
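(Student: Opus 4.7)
The plan is to reduce the statement to Theorem \ref{t2} (the Fatou capacity property) by writing $\liminf_{i\to\infty} A_i$ as an increasing union and bounding the capacity of each piece of the union by the tail infimum of $\mathcal{C}_{\gamma,p}(A_k)$.

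First I would recall the set-theoretic identity $\liminf_{i\to\infty} A_i = \bigcup_{j=1}^{\infty}\bigcap_{k\geq j} A_k$ and set $S := \liminf_{i\to\infty} A_i$, together with the partial unions $S_i := \bigcup_{j=1}^{i}\bigcap_{k\geq j} A_k$. Since the sets $\bigcap_{k\geq j}A_k$ are monotone increasing in $j$, the sequence $S_i$ is also monotone increasing and satisfies $S = \bigcup_{i=1}^{\infty} S_i$. Therefore, by Theorem \ref{t2}, we have
$$\mathcal{C}_{\gamma,p}(S) = \lim_{i\to\infty}\mathcal{C}_{\gamma,p}(S_i).$$

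The second step is to observe that $S_i \subset A_k$ for every $k \geq i$, because $S_i$ is a union of intersections indexed by some $j \leq i$, each of which is contained in $A_k$ whenever $k \geq j$, and in particular whenever $k \geq i$. Applying monotonicity (Theorem \ref{Nov16}, property (b)) gives $\mathcal{C}_{\gamma,p}(S_i) \leq \mathcal{C}_{\gamma,p}(A_k)$ for all $k \geq i$, hence
$$\mathcal{C}_{\gamma,p}(S_i) \leq \inf_{k\geq i}\mathcal{C}_{\gamma,p}(A_k).$$

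Finally I would let $i \to \infty$ on both sides and combine with the equality from the first step: the right-hand side tends to $\liminf_{i\to\infty}\mathcal{C}_{\gamma,p}(A_i)$ by definition, while the left-hand side tends to $\mathcal{C}_{\gamma,p}(S)$, yielding the required inequality. There is no real obstacle here; this is essentially a formal consequence of Theorem \ref{t2} together with monotonicity, and the only thing to be careful about is the bookkeeping for why $S_i \subset A_k$ for $k \geq i$.
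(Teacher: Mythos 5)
Your proposal is correct and follows essentially the same route as the paper: the same decomposition $S=\bigcup_i S_i$ with $S_i=\bigcup_{j=1}^i\bigcap_{k\geq j}A_k$, the same appeal to Theorem~\ref{t2}, and the same monotonicity bound $\mathcal{C}_{\gamma,p}(S_i)\leq\inf_{k\geq i}\mathcal{C}_{\gamma,p}(A_k)$. The only difference is that you spell out the final inequality in more detail than the paper does, which is harmless.
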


%
The next definition extends the outer capacity property of Theorem 3.4 to the case where the Riesz capacity of a set $E \subset X$ can also be obtained by approximating with compact sets from the inside. By Theorem 3.15, we have this inner capacity property for the Riesz capacity, when considering analytic sets (for the definition of analytic sets, we refer to e.g. \cite{AE}, \cite{Ch}, \cite{M85}).

\begin{definition}
A set $E \subset X$ is called $\mathcal{C}_{\g, p}$-capacitable, if
\begin{align*}
\mathcal{C}_{\g, p}(E)&=\sup\{ \mathcal{C}_{\g, p}(K) : K \subset E, \, K \, \text{compact} \} \\
&= \inf\{  \mathcal{C}_{\g, p}(O) :  O \supset E, \, O \, \text{open} \}.
\end{align*}
\end{definition}

Capacitability has been studied in a very general context by Choquet in~\cite{Ch}. Other references are \cite{A}, \cite{AH}, \cite{AE}, \cite{Fe}, \cite{M85} and \cite{Me}, and the references therein. Choquet's capacitability theorem (see \cite[pp.182--184]{AE}) together with Theorems~\ref{t1} and~\ref{t2} give the next theorem, which says that all analytic sets are $\mathcal{C}_{\g, p}$-capacitable. In particular, we have that all Borel sets are $\mathcal{C}_{\g, p}$-capacitable, which means that the Riesz capacity is a so called Choquet capacity.

\begin{theorem}\label{An}
All analytic sets, and hence all Borel sets, are $\mathcal{C}_{\g, p}$-capacitable.
\end{theorem}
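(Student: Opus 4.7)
The plan is to invoke Choquet's classical capacitability theorem, as formulated in \cite[pp.~182--184]{AE}. That theorem says that a set function defined on all subsets of a sufficiently nice topological space (here guaranteed by the local compactness of $(X,d)$ assumed in Section 2) is inner regular by compact sets on every analytic subset, provided it satisfies three hypotheses: monotonicity, continuity from below along increasing sequences of arbitrary subsets, and continuity from above along decreasing sequences of compact sets. So the whole strategy is to check that these three hypotheses have already been verified earlier in this section, and then to read off the conclusion.

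First I would match each hypothesis to a previous result: monotonicity is part of Theorem~\ref{Nov16}; continuity from above along decreasing sequences of compact sets is exactly Theorem~\ref{t1}; and continuity from below along increasing sequences of arbitrary subsets is precisely the Fatou-capacity property proved in Theorem~\ref{t2}. Once these three inputs are collected, Choquet's theorem directly yields, for every analytic set $E \subset X$, the inner approximation
$$\mathcal{C}_{\gamma,p}(E) = \sup\{\mathcal{C}_{\gamma,p}(K) : K \subset E,\ K \text{ compact}\}.$$

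To finish, I would combine this inner approximation with the outer-capacity identity of Theorem~\ref{t3}, which provides the matching infimum over open supersets for any $E \subset X$. Together these two equalities are exactly the definition of $\mathcal{C}_{\gamma,p}$-capacitability in Definition 3.14, proving the first half of the statement. Since every Borel subset of a metric space is analytic, the Borel case follows at no extra cost.

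I do not expect any genuine obstacle here: the substantive analytic content has been packaged into Theorems~\ref{Nov16}, \ref{t3}, \ref{t1} and \ref{t2}, and the present statement is essentially a bookkeeping application of a classical external theorem. The only minor care needed is to make sure that the particular formulation of Choquet's theorem that we cite lines up exactly with the three properties that we have in hand, which is straightforward in the locally compact metric setting we are working in.
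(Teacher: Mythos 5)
Your proposal is correct and follows essentially the same route as the paper, which likewise derives the result from Choquet's capacitability theorem combined with Theorems~\ref{t1} and~\ref{t2} (and, implicitly, the outer regularity of Theorem~\ref{t3} and the monotonicity from Theorem~\ref{Nov16}). Your explicit matching of each hypothesis of Choquet's theorem to a previously proved result, and the explicit appeal to Theorem~\ref{t3} for the open-set half of Definition 3.14, is just a slightly more detailed write-up of the same argument.
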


For the following variational problem 
\beq\label{p1}
\min\Big\{ \|f\|_{L^p(X)}^p: f \in L^p(X), I_{\g} f \geq 1 \, \, \mathcal{C}_{\g,p}\text{-q.e.} \text{ on } A\Big\}, \eeq
we call a solution $f$ a $\mathcal{C}_{\g,p}$-capacitary distribution of $A$ and $I_{\g}f$ a $\mathcal{C}_{\g,p}$-capacitary potential of $A$. 

\begin{theorem}
 If $\mathcal{C}_{\g,p}(A) < \oo$, then $A$ has a unique $\mathcal{C}_{\g,p}$-capacitary distribution $f$ for which $f \in L^p_{+}(X)$, $\|f\|_{L^p(X)}^p = \mathcal{C}_{\g,p}(A)$ and 
$$ \int_{X} f(x)^{p-1} g(x) d \mu(x) \geq 0$$ 
for all $g \in L^p(X)$ such that 
$$ I_{\g} g \geq 0 \,\, \, \mathcal{C}_{\g,p}\text{-q.e.} \text{ on } A.$$
\begin{proof} 
Using Clarkson's inequality for $L^p$-norms, the theorem follows as in~\cite[Theorem 9]{Me} due to previous results in the paper.
\end{proof}
\end{theorem}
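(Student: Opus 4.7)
The plan is to follow the classical direct method of the calculus of variations, as in Meyers \cite[Theorem 9]{Me}, using the machinery developed earlier in this section. There are three pieces: existence, uniqueness, and the variational inequality.

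\textbf{Existence.} I take a minimizing sequence $\{f_i\}$ for the variational problem (\ref{p1}). Since $I_\gamma$ has a nonnegative kernel, $I_\gamma |f_i| \geq I_\gamma f_i \geq 1$ q.e.\ on $A$ and $\||f_i|\|_{L^p(X)} = \|f_i\|_{L^p(X)}$, so after replacing $f_i$ by $|f_i|$ I may assume $f_i \in L^p_+(X)$. By reflexivity of $L^p(X)$ for $1<p<\infty$, a subsequence $\{f_{i_j}\}$ converges weakly to some $f$, and since $L^p_+(X)$ is convex and norm-closed, hence weakly closed, $f \in L^p_+(X)$. Theorem~\ref{t6}(ii) yields $I_\gamma f = \liminf_{j\to\infty} I_\gamma f_{i_j}$ $\mathcal{C}_{\gamma,p}$-q.e., so the constraint $I_\gamma f_{i_j}\geq 1$ q.e.\ on $A$ is inherited by $f$. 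Weak lower semicontinuity of $\|\cdot\|_{L^p(X)}^p$ then gives $\|f\|_{L^p(X)}^p \leq \liminf_j\|f_{i_j}\|_{L^p(X)}^p$. To identify the infimum in (\ref{p1}) with $\mathcal{C}_{\gamma,p}(A)$ (the inequality $\leq$ is obvious), I add to any q.e.-admissible function an arbitrarily small multiple of a potential supplied by Theorem~\ref{t4} that equals $+\infty$ on the exceptional set of capacity zero, upgrading the constraint to hold everywhere on $A$ at negligible $L^p$-cost.

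\textbf{Uniqueness.} Suppose $f_1, f_2 \in L^p_+(X)$ are two minimizers. By Corollary~\ref{t5}, $I_\gamma\bigl(\tfrac12(f_1+f_2)\bigr) = \tfrac12\bigl(I_\gamma f_1 + I_\gamma f_2\bigr) \geq 1$ $\mathcal{C}_{\gamma,p}$-q.e.\ on $A$, so $\tfrac12(f_1+f_2)$ is admissible for (\ref{p1}). Clarkson's inequalities, which express the uniform convexity of $L^p$ for $1<p<\infty$, then force $\|\tfrac12(f_1-f_2)\|_{L^p(X)} = 0$, hence $f_1 = f_2$ $\mu$-a.e.

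\textbf{Variational inequality.} Given $g \in L^p(X)$ with $I_\gamma g \geq 0$ q.e.\ on $A$ and $t > 0$, Corollary~\ref{t5} gives $I_\gamma(f + tg) = I_\gamma f + t\, I_\gamma g \geq 1$ q.e.\ on $A$, so $f + tg$ is admissible for (\ref{p1}). Minimality yields $\|f+tg\|_{L^p(X)}^p \geq \|f\|_{L^p(X)}^p$ for all $t > 0$. Differentiating at $t = 0^+$, using that $f \geq 0$ so the pointwise derivative of $s \mapsto |f + sg|^p$ at $s = 0$ equals $p\, f^{p-1} g$, and justifying the interchange of differentiation and integration by a standard dominated-convergence argument (the increments are controlled in $L^1$ by an $L^{p'}$-bound on $f^{p-1}$ and an $L^p$-bound on $g$), produces $p\int_X f^{p-1} g\, d\mu \geq 0$, which is the claim.

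The main obstacle is the passage of the pointwise constraint $I_\gamma f_{i_j}\geq 1$ to the weak limit: weak $L^p$-convergence alone gives no pointwise information on the potentials, and this is exactly where Theorem~\ref{t6}(ii) is indispensable. Once that is in hand, strict convexity of the $L^p$-norm via Clarkson is the only nontrivial ingredient, and everything else is a routine application of results already proved in this section.
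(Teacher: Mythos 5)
Your proposal is correct and follows essentially the same route as the paper, whose proof simply defers to Meyers's Theorem 9 via Clarkson's inequality together with the earlier results of the section (Theorem~\ref{t4} for upgrading the q.e.\ constraint, Corollary~\ref{t5} for linearity q.e., Theorem~\ref{t6}(ii) for passing the constraint to the weak limit); you have written out precisely that argument. The details you supply (weak closedness of $L^p_{+}(X)$, the $\epsilon$-perturbation identifying the infimum of \eqref{p1} with $\mathcal{C}_{\g,p}(A)$, and the dominated-convergence justification of the one-sided derivative) are all sound.
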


\subsection*{Dual Riesz capacity} 
Let $\nu$ be a positive measure on $X$, $1 < p < \oo$ and $A \in \mathcal{F}$, where $\mathcal{F}$ is the $\s$-algebra of sets which are $\nu$-measurable for all positive measures $\nu$ with finite total variation in $X$. The total variation of any such $\nu$ in $X$ is
$$\| \nu\| = \nu(X)= \sup\Big\{ \nu(A): A \subset X, \, A \text{ is measurable}\Big\}.$$
In the case of a measure of this type, we define 
$$I_{\g} \nu(x)= \int_{X} \frac{ d \nu(y)}{ \mu( B(x, d(x,y)))^{1- \g}},$$
and following~\cite{Me} we introduce a capacity, which uses measures as test elements
\begin{align*}
 c_{\g,p}(A)=\sup\Big\{ &\| \nu\| : \nu \text{ is a positive measure in $X$}, \, \, \nu(X \setminus A)=0, \\
&||\nu||<\infty \text{ and } \| I_{\g} \nu\|_{L^{p'}(X)} \leq 1 \Big\},
\end{align*}
where 
$\frac{1}{p}+\frac{1}{p'}=1$. 

With the same techniques as in \cite[Theorem 12, Theorem 14]{Me} or \cite[pp. 114--117]{AE}, we see that $c_{\g,p}$ is an inner capacity on $\mathcal{F}$ that satisfies
\begin{equation}
\label{dual}
c_{\g,p}(A) = \mathcal{C}_{\g,p}(A)^{1/p}.
\end{equation}
Indeed, for the inequality $c_{\g,p}(A) \leq \mathcal{C}_{\g,p}(A)^{1/p}$, let $f \in \mathcal{A}(A)$ and let $\nu$ be a test measure for $c_{\g,p}(A)$. Then, by H\"older's inequality,
\begin{align*}
&\nu(A)\leq\int_{A} I_{\g} f (x) d \nu(x)  = \int_{X} \int_{A} \frac{1}{ \mu( B(x, d(x,y)) )^{1-\g}  } d \nu(x) f(y)  d \mu(y) \\
&\leq \Big( \int_{X}  \Big( \int_{A} \frac{1}{\mu(B(x, d(x,y)))^{1-\g} } d \nu(x)   \Big)^{p'}  d \mu(y)  \Big)^{\frac{1}{p'}} \Big(\int_{X} f(y)^p  d \mu(y) \Big)^{\frac{1}{p}} \\
&\leq \|f \|_{L^p(X)} \| I_{\g} \nu\|_{L^{p'}(X)} \leq \|f \|_{L^p(X)}
\end{align*}
and the inequality
$$ c_{\g,p}(A) \leq \mathcal{C}_{\g,p}(A)^{1/p}$$
follows by taking the infimum over admissible functions $f$ and the supremum over the test measures $\nu$.
In order to obtain the equality \eqref{dual}, one can show that (see \cite{AE} or \cite{Me})
$$ \mathcal{C}_{\g,p}(K)^{-1/p}= \sup_{f}\big\{  \inf_{x \in K} I_{\g} f(x) : f \in L^p_{+}(X), \| f\|_{L^p(X)} \leq 1 \big\}$$
and 
$$c_{\g,p}(K) = \Big( \inf_{\| \nu \| = 1} \| I_{\g} \nu\|_{L^{p'}(X)} \Big)^{-1} = \Big( \inf_{\| \nu \| = 1} \sup_{f \in L^p_{+}(X)} | \int_{X} ( I_{\g} \nu) f d\mu | \Big)^{-1}$$
for compact sets $K$, where $\nu$ is a positive measure supported on $K$ and $\|f\|_{L^p(X)} \leq 1.$
Thus, as in \cite[Theorem 14]{Me} or \cite[Theorem 3.6.1, p.115]{AE}, the equality \eqref{dual} follows with the use of the Minimax Theorem (see~\cite{F}) and for general sets $A$ by a capacitability argument.

\section{Capacity estimates and Hausdorff measure}

In this section, we define a Hausdorff measure based on the upper bound estimate for the capacity of balls. We prove an upper bound estimate for the capacity of a set $E \subset X$ in terms of this modified Hausdorff content. We also show that the Hausdorff content, satisfying a condition placed by $\gamma p$, is zero if the capacity of the set is zero. For the latter result, we assume that our space $X$ satisfies inequality \eqref{tuplaavuus toinen suunta} which holds in connected spaces. Similar results for compact sets can be found in \cite[Theorem 2.2]{Sj}, where the assumption of connectedness is replaced by a density condition that gives an inequality equivalent to \eqref{tuplaavuus toinen suunta}. In this section, we give direct and short proofs to results that apply not only for compact sets. In particular, we do not need to use a version of Frostman's lemma in our proofs. Also, unlike in \cite{Sj}, we do not assume our space to be complete. We start by showing that the Riesz capacity of a ball is bounded from above by a constant times the measure of the ball to the power $1-\gamma p$.

\begin{lemma}
Let $1<p<\infty$ and $0<\gamma<1$ be such that $\gamma p <1$. Then $$\mathcal{C}_{\gamma, p} \left(B(x,r)\right) \leq C \, \mu \left(B(x,r)\right)^{1-\gamma p}.$$ 
\begin{proof}
Choose $$g= c^2 \, 3^{Q \left(1-\gamma\right)} \frac{\chi_{B(x,r)}}{\mu\left(B(x,r)\right)^{\gamma}},$$ where $c>0$ and $Q>0$ are some constants, for which the inequality \eqref{doubling dimension} holds. For each $z \in B(x,r)$, we have

\begin{align*}
I_{\gamma} g(z) 
&= \frac{c^2 \, 3^{Q \left(1-\gamma\right)}}{\mu\left(B(x,r)\right)^{\gamma}} \int_{B(x,r)} \frac{1}{\mu\left(B(z,d(z,y))\right)^{1-\gamma}} \, d\mu (y).
\end{align*}
For each $y \in B(x,r)$, we have that $d(z,y) \leq 2r$, since $z \in B(x,r)$. Now
$$\mu \left(B(z,d(z,y))\right) \leq \mu \left(B(z,2r)\right) \leq \mu \left(B(x,3r)\right),$$  for each $y \in B(x,r)$.
Then
\begin{align*}
I_{\gamma} g(z) &\geq \frac{c^2 \, 3^{Q \left(1-\gamma \right)}}{\mu\left(B(x,r)\right)^{\gamma}} \int_{B(x,r)} \frac{1}{\mu\left(B(x,3r)\right)^{1-\gamma}} \, d\mu (y) \\
&= c^2 \, 3^{Q \left(1-\gamma\right)} \cdot \frac{\mu\left(B(x,r)\right)^{1-\gamma}}{\mu\left(B(x,3r)\right)^{1-\gamma}} \\
&\geq c^2 \, 3^{Q \left(1-\gamma\right)} \cdot \frac{1}{c^2} \cdot \left(\frac{r}{3r}\right)^{Q \left(1-\gamma\right)} = 1 \, ,
\end{align*}
where the last inequality follows by \eqref{doubling dimension}. Thus $g$ is admissible and we get the upper bound

\begin{align*}
\mathcal{C}_{\gamma, p} \left(B(x,r)\right) &\leq ||g||_{L^p(X)}^p \\
&= c^{2p} \, 3^{Q \left(1-\gamma\right)p} \, \frac{\mu \left(B(x,r)\right)}{\mu \left(B(x,r)\right)^{\gamma p}} \\
&= C \, \mu \left(B(x,r)\right)^{1-\gamma p}.
\end{align*}
\end{proof}
\end{lemma}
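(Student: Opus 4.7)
The plan is to exhibit an explicit admissible function whose $L^p$-norm realizes the claimed bound up to a multiplicative constant. A natural candidate is
\[
g = K \cdot \frac{\chi_{B(x,r)}}{\mu(B(x,r))^{\gamma}},
\]
where $K$ is a constant depending only on the doubling constant $c_d$, on $Q$, and on $\gamma$, to be fixed so that admissibility holds. The normalization $\mu(B(x,r))^{-\gamma}$ is chosen precisely so that raising the $L^p$-norm to the power $p$ produces the exponent $1 - \gamma p$ on $\mu(B(x,r))$; the assumption $\gamma p < 1$ enters only to make the resulting estimate meaningful (i.e.\ yielding a small quantity for balls of small measure), not to establish admissibility itself.

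To verify that $g \in \mathcal{A}(B(x,r))$, I would fix $z \in B(x,r)$ and estimate $I_\gamma g(z)$ from below. For every $y \in B(x,r)$, the triangle inequality gives $d(z,y) \leq 2r$, so $B(z,d(z,y)) \subset B(x,3r)$, and by monotonicity of $\mu$,
\[
\mu(B(z,d(z,y)))^{1-\gamma} \leq \mu(B(x,3r))^{1-\gamma}.
\]
Inserting this uniform bound into the definition of $I_\gamma g(z)$ and integrating over $B(x,r)$ gives
\[
I_\gamma g(z) \geq K \cdot \mu(B(x,r))^{-\gamma} \cdot \frac{\mu(B(x,r))}{\mu(B(x,3r))^{1-\gamma}} = K \left(\frac{\mu(B(x,r))}{\mu(B(x,3r))}\right)^{1-\gamma}.
\]
Applying the doubling dimension inequality \eqref{doubling dimension} with $y=x$, $R = 3r$, I get $\mu(B(x,r))/\mu(B(x,3r)) \geq C \cdot 3^{-Q}$, where $C$ and $Q$ depend only on $c_d$. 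Choosing $K = C^{-(1-\gamma)} 3^{Q(1-\gamma)}$ then makes $I_\gamma g(z) \geq 1$ for every $z \in B(x,r)$, establishing admissibility.

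Finally, directly computing the $L^p$-norm,
\[
\|g\|_{L^p(X)}^p = K^p \cdot \mu(B(x,r))^{-\gamma p} \cdot \mu(B(x,r)) = K^p \cdot \mu(B(x,r))^{1-\gamma p},
\]
and the definition of the Riesz capacity as an infimum over admissible functions yields
\[
\mathcal{C}_{\gamma,p}(B(x,r)) \leq K^p \mu(B(x,r))^{1-\gamma p},
\]
which is the claim with a constant depending only on $c_d$, $Q$, $\gamma$, $p$. There is no real obstacle in this argument; the only bookkeeping issue is matching the constants in the admissibility step so that the doubling-type loss from passing $r \mapsto 3r$ is absorbed by the prefactor $K$.
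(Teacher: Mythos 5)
Your proposal is correct and follows essentially the same route as the paper: the same test function $\chi_{B(x,r)}/\mu(B(x,r))^{\gamma}$ up to a constant, the same inclusion $B(z,d(z,y))\subset B(x,3r)$, and the same use of \eqref{doubling dimension} to fix the normalizing constant; the only difference is the cosmetic choice $K=C^{-(1-\gamma)}3^{Q(1-\gamma)}$ versus the paper's slightly cruder $c_d^{2}\,3^{Q(1-\gamma)}$.
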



Lemma $4.1$ leads us to define a modified version of the Hausdorff measure that works in our generality. In this section, (and throughout the paper) we do not assume the doubling measure $\mu$ to satisfy the regularity \eqref{ahlfors} or any other estimates that would give uniform lower bounds or upper bounds for the measure of balls in terms of the radii. Recall that the usual definition for the $\lambda$-Hausdorff content of a set $E \subset X$, for $0<r\leq \infty$, is
$$\mathcal{H}_r^\lambda(E)=\inf\big\{\sum_{i=1}^{\infty} r_i^\lambda: E \subset \bigcup_{i=1}^{\infty}B(x_i,r_i), \, x_i \in E, \, r_i \leq r\big\},$$
and the $\lambda$-Hausdorff measure of $E$ is $\mathcal{H}^\lambda(E)= \lim_{r \rightarrow 0} \mathcal{H}_r^\lambda(E)$. The Hausdorff dimension of $E$ is the number $$\dim(E)= \inf\left\{ \lambda>0: \mathcal{H}^\lambda(E)=0\right\}.$$ 
Let $1<p<\infty$, $0<\gamma<1$ and $\gamma p <1$. In our case, we define the Hausdorff content of a set $E \subset X$, for $0<r\leq\infty$, as 
$$\widetilde{\mathcal{H}}_r^{\gamma,p}(E)= \inf\big\{\sum_{i=1}^{\infty} \mu(B(x_i,r_i))^{1-\gamma p}: E \subset \bigcup_{i=1}^{\infty}B(x_i,r_i), \, x_i \in E, \, r_i \leq r \big\}.$$
Then, the Hausdorff measure is $$\widetilde{\mathcal{H}}^{\gamma,p}(E)= \lim_{r \rightarrow 0} \widetilde{\mathcal{H}}_r^{\gamma,p}(E).$$
Note that if the measure is $Q$-regular, then $$\widetilde{\mathcal{H}}_r^{\gamma,p} \approx \mathcal{H}_r^{Q(1-\gamma p)}.$$
In the next theorem, we show that the Riesz capacity of a set $E \subset X$ is bounded from above by a constant times the (modified) Hausdorff content of the set $E$. In particular, this implies that compact sets with positive capacity have positive Hausdorff measure (see \cite[Theorem 2.2]{Sj}).

\begin{theorem}
Let $1<p<\infty$ and $0<\gamma<1$ be such that $\gamma p<1$. Then $\mathcal{C}_{\gamma,p}(E) \leq C \widetilde{\mathcal{H}}^{\gamma,p}_{\infty}(E)$ for each $E \subset X$, where $C$ is the same constant as in Lemma 4.1.
\begin{proof}
Suppose that $\widetilde{\mathcal{H}}^{\gamma,p}_{\infty}(E)<\infty$, otherwise the claim is obvious. For $\epsilon >0$, there is a countable covering $\left\{B(x_i,r_i)\right\}$ of $E$ such that 
$$\sum_{i=1}^{\infty} \mu(B(x_i,r_i))^{1-\gamma p} < \widetilde{\mathcal{H}}^{\gamma,p}_{\infty}(E) + \epsilon.$$ Now, by the monotonicity and Theorem 4.1
\begin{align*}
\mathcal{C}_{\gamma,p}(E) &\leq \sum_{i=1}^{\infty} \mathcal{C}_{\gamma,p} (B(x_i,r_i)) \\
&\leq C \sum_{i=1}^{\infty} \mu(B(x_i,r_i))^{1-\gamma p} \\
&< C (\widetilde{\mathcal{H}}^{\gamma,p}_{\infty}(E) + \epsilon).
\end{align*}
The claim follows by letting $\epsilon \rightarrow 0$.
\end{proof}
\end{theorem}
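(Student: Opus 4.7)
The plan is to reduce the bound on $\mathcal{C}_{\gamma,p}(E)$ to a bound on a countable union of balls, and then apply the previously established estimate for balls (Lemma 4.1) together with the countable subadditivity of capacity (proved in Theorem \ref{Nov16}).

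First I would dispose of the trivial case $\widetilde{\mathcal{H}}^{\gamma,p}_{\infty}(E)=\infty$, where the inequality is vacuous. Assuming $\widetilde{\mathcal{H}}^{\gamma,p}_{\infty}(E)<\infty$, I would fix $\epsilon>0$ and use the definition of the modified Hausdorff content $\widetilde{\mathcal{H}}^{\gamma,p}_{\infty}$ to extract a countable cover $\{B(x_i,r_i)\}_{i=1}^\infty$ of $E$ with centers $x_i\in E$ satisfying
\begin{equation*}
\sum_{i=1}^{\infty} \mu(B(x_i,r_i))^{1-\gamma p} < \widetilde{\mathcal{H}}^{\gamma,p}_{\infty}(E) + \epsilon.
\end{equation*}

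Next, using monotonicity and countable subadditivity of $\mathcal{C}_{\gamma,p}$ (from Theorem \ref{Nov16}) applied to the cover, I would write
\begin{equation*}
\mathcal{C}_{\gamma,p}(E) \le \mathcal{C}_{\gamma,p}\Big(\bigcup_{i=1}^\infty B(x_i,r_i)\Big) \le \sum_{i=1}^\infty \mathcal{C}_{\gamma,p}(B(x_i,r_i)).
\end{equation*}
Then Lemma 4.1 (with the constant $C$ independent of the ball) gives $\mathcal{C}_{\gamma,p}(B(x_i,r_i)) \le C\,\mu(B(x_i,r_i))^{1-\gamma p}$ for each $i$, which combined with the previous display yields
\begin{equation*}
\mathcal{C}_{\gamma,p}(E) \le C\sum_{i=1}^\infty \mu(B(x_i,r_i))^{1-\gamma p} < C\bigl(\widetilde{\mathcal{H}}^{\gamma,p}_{\infty}(E) + \epsilon\bigr).
\end{equation*}
Letting $\epsilon\to 0$ finishes the argument.

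This proof is essentially a routine concatenation of three facts, so I do not expect a genuine obstacle; the only points worth attention are verifying that the constant $C$ from Lemma 4.1 does not depend on the ball (which it does not, since it only depends on the doubling constant $c_d$ and the exponents $\gamma, p$), and making sure to use the version of Hausdorff content with radius bound $\infty$, since Lemma 4.1 applies uniformly to balls of arbitrary radius. No form of Frostman's lemma is needed, which is consistent with the authors' stated strategy in the section's introduction.
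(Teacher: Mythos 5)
Your proposal is correct and follows exactly the same route as the paper's proof: take a near-optimal countable cover from the definition of $\widetilde{\mathcal{H}}^{\gamma,p}_{\infty}$, apply monotonicity and countable subadditivity of $\mathcal{C}_{\gamma,p}$, invoke Lemma 4.1 on each ball, and let $\epsilon \to 0$. Your added remarks on the uniformity of the constant $C$ and the use of the radius bound $\infty$ are accurate but not points where the paper's argument differs.
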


For the proof of the next theorem, we need an opposite inequality to \eqref{doubling dimension} which is true in connected spaces. Indeed, if $X$ is connected then by \cite[Corollary 3.8]{BB} there exist constants $C>0$ and $s>0$ such that for all balls $B(y,R)$ in $X$, all $z \in B(y,R)$ and all $0<r\leq R$, 
\begin{equation}\label{tuplaavuus toinen suunta}
\frac{\mu(B(z,r))}{\mu(B(y,R))} \leq C \left(\frac{r}{R}\right)^s.
\end{equation}
Note that inequality \eqref{tuplaavuus toinen suunta} given by the connectedness (or uniform perfectness) of the space $X$ is equivalent to the density condition assumed in \cite{Sj}. 
The proof of the next theorem is direct and we do not need to use Frostman's lemma to obtain the result. In the Euclidean space, with the Lebesgue measure, we use the notation $\alpha = \gamma n$ and consider the usual Riesz potential of order $\alpha$ from Remark 2.2 (ii). Our result implies the classical result that if the Riesz capacity of a set $E$ is zero, then $E$ has Hausdorff dimension at most $n-\alpha p$, where $\alpha p <n$ (see e.g. \cite[Section 5.2, Theorem 2.3]{Mi}).  

\begin{theorem}
Assume that $X$ satisfies \eqref{tuplaavuus toinen suunta}. Let $1<p<\infty$, $1<\tilde{p}<\infty$, $0<\gamma <1$ and $0<\tilde{\gamma}<1$ be such that $\gamma p<1$, $\tilde{\gamma} \tilde{p}<1$ and $\tilde{\gamma} \tilde{p} < \gamma p$. 
If $\mathcal{C}_{\gamma,p}(E)=0$, then $\widetilde{\mathcal{H}}_{\infty}^{\tilde{\gamma},\tilde{p}}(E)=0$.

\begin{proof}
In the following, we prove the result for bounded sets $E \subset X$. If the set $E$ is not bounded, then there exists bounded sets $E_j$ such that $E=\bigcup_{j=1}^{\infty} E_j$. We then use the countable subadditivity of the Hausdorff content, the monotonicity of the Riez capacity and the result for bounded sets to obtain the result for unbounded sets $E$.


Let $E \subset X$ be a bounded set. For $\epsilon > 0$, there is an admissible function $f \geq 0$ such that $$||f||_{L^p(X)}^p < \epsilon.$$ For such a function $f$, at each point $x \in E$,
$$1 \leq \int_{X} \frac{f(y)}{\mu\left(B(x,d(x,y))\right)^{1-\gamma}} \, d\mu (y).$$ Let $x_0 \in E$. We choose $R_0>\diam(E)$ large enough such that $E \subset B(x_0, R_0)$ and that the integral below is more than one half. Notice that we can always find such a radius $R_0$ but the selection depends on the set $E$.
Define $R=2R_0$ and $r_i=2^{-i}R$, for $i \in \mathbb{N}$. For each point $x \in E$
\begin{align*}
\frac{1}{2} &\leq \int_{B(x_0,R_0)} \frac{f(y)}{\mu\left(B(x,d(x,y))\right)^{1-\gamma}} \, d\mu (y) \\
&\leq \int_{B(x,R)} \frac{f(y)}{\mu\left(B(x,d(x,y))\right)^{1-\gamma}} \, d\mu (y)
\end{align*}
and hence
\begin{align*}
 1&\leq 2 \int_{B(x,R)} \frac{f(y)}{\mu\left(B(x,d(x,y))\right)^{1-\gamma}} \, d\mu (y) \\
&=2 \sum_{i=0}^{\infty} \int_{B(x,r_i)\setminus B(x,r_{i+1})} \frac{f(y)}{\mu\left(B(x,d(x,y))\right)^{1-\gamma}} \, d\mu (y) \\
&\leq 2 \sum_{i=0}^{\infty} \int_{B(x,r_i)\setminus B(x,r_{i+1})} \frac{f(y)}{\mu\left(B(x,r_{i+1})\right)^{1-\gamma}} \, d\mu (y) \\
&\leq 2 \sum_{i=0}^{\infty} \frac{1}{\mu\left(B(x,r_{i+1})\right)^{1-\gamma}} \int_{B(x,r_i)} f(y) \, d\mu (y). \\
\end{align*}
Using H\"older's inequality and the doubling condition, we get
$$1 \leq C \sum_{i=0}^{\infty} \mu\left(B(x,r_{i})\right)^{\gamma-1+\frac{p-1}{p}} \Big(\int_{B(x,r_i)} f(y)^p \, d\mu (y)\Big)^{1/p}.$$
Next, we use the fact that for any $\delta>0$ there is a constant $C>0$ such that $$1=C \sum_{i=0}^{\infty} 2^{-i \delta}=C \sum_{i=0}^{\infty} \left(\frac{r_i}{R}\right)^{\delta}.$$ 
Then, inequality \eqref{tuplaavuus toinen suunta} gives
$$C \sum_{i=0}^{\infty} \left(\frac{\mu\left(B(x,r_{i})\right)}{\mu(B(x_0,R))}\right)^{\delta/s} \leq C \sum_{i=0}^{\infty} \left(\frac{r_i}{R}\right)^{\delta}=1.$$ Now, by putting the measure of the ball $B(x_0,R)$ as part of the constant, we have that
$$C \sum_{i=0}^{\infty} \mu(B(x,r_i))^{\delta/s} \leq \sum_{i=0}^{\infty} \mu\left(B(x,r_{i})\right)^{\gamma-1 + \frac{p-1}{p}} \Big(\int_{B(x,r_i)} f(y)^p \, d\mu (y)\Big)^{1/p},$$ where the constant $C$ depends on $R$. For $\delta>0$, there exists at least one index $i_x \in \mathbb{N}$ such that 
$$\mu\left(B(x,r_{i_x})\right)^{\gamma-1 +\frac{p-1}{p}} \Big(\int_{B(x,r_{i_x})} f(y)^p \, d\mu (y)\Big)^{1/p} \geq C \mu(B(x,r_{i_x}))^{\delta/s}$$ and, by raising both sides to the power $p$, we get
\begin{align*}
\int_{B(x,r_{i_x})} f(y)^p \, d\mu (y) &\geq C \mu(B(x,r_{i_x}))^{\delta p/s - (\gamma-1)p-p+1} \\
&= C \mu(B(x,r_{i_x}))^{\delta p/s - \gamma p+1}.
\end{align*}
We choose $$\delta= \frac{\gamma p - \tilde{\gamma} \tilde{p}}{p} \cdot s,$$ which is positive, as $\gamma p > \tilde{\gamma} \tilde{p}.$ We obtain for each $x \in E$ a ball $B(x, r_{i_x})=B_x$ such that
\begin{equation}\label{jepjoo}
\mu(B_x)^{1-\tilde{\gamma} \tilde{p}} \leq C \int_{B_x} f(y)^p \, d \mu (y).
\end{equation}
By using the basic $5r$-covering theorem (see e.g. \cite{He}), we obtain countably many points $x_j \in E$, such that the balls $B_j=B_{x_j}$ are pairwise disjoint and $E \subset \bigcup_{j=1}^{\infty} 5B_j$. 
Using the estimate \eqref{jepjoo}, the doubling property of the measure $\mu$ and the pairwise disjointness of the balls $B_j$, we get
\begin{align*}
\widetilde{\mathcal{H}}_{\infty}^{\tilde{\gamma},\tilde{p}}(E) &\leq \sum_{j=1}^{\infty} \mu(5 B_j)^{1-\tilde{\gamma} \tilde{p}} \leq C \sum_{j=1}^{\infty} \mu (B_j)^{1- \tilde{\gamma} \tilde{p}} \\
&\leq C \sum_{j=1}^{\infty} \int_{B_j} f(y)^p \, d \mu (y) \leq C \int_{X} f(y)^p \, d \mu (y) \\
&= C \, ||f||_{L^p(X)}^p< C \, \epsilon.
\end{align*}
Letting $\epsilon \rightarrow 0$ yields the claim.
\end{proof}
\end{theorem}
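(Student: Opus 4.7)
The plan is a direct covering argument, avoiding Frostman's lemma. First I would reduce to the case of bounded $E$: write an unbounded set as a countable union of bounded pieces and combine the countable subadditivity of $\widetilde{\mathcal{H}}_\infty^{\tilde{\gamma}, \tilde{p}}$ with the monotonicity of $\mathcal{C}_{\gamma, p}$. So assume $E$ is bounded, fix $\epsilon > 0$, and use the hypothesis $\mathcal{C}_{\gamma, p}(E) = 0$ to pick an admissible $f \in L^p_+(X)$ with $\|f\|_{L^p(X)}^p < \epsilon$ and $I_\gamma f \geq 1$ on $E$.

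Next, I would fix $x_0 \in E$ and pick $R_0 > \diam(E)$ large enough that $E \subset B(x_0, R_0)$ and the tail of $I_\gamma f$ outside $B(x_0, R_0)$ contributes at most $1/2$ uniformly on $E$. Setting $R = 2R_0$ ensures $E \subset B(x, R)$ for every $x \in E$, so the potential restricted to $B(x, R)$ is already $\geq 1/2$. I would then decompose this integral into dyadic annuli $B(x, r_i) \setminus B(x, r_{i+1})$ with $r_i = 2^{-i} R$, on each annulus bound $\mu(B(x, d(x,y)))^{1-\gamma}$ from below by $\mu(B(x, r_{i+1}))^{1-\gamma}$, apply H\"older's inequality, and absorb $\mu(B(x, r_{i+1}))$ into $\mu(B(x, r_i))$ by the doubling condition. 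This produces
\[
1 \leq C \sum_{i=0}^{\infty} \mu(B(x, r_i))^{\gamma - 1 + (p-1)/p} \Big(\int_{B(x, r_i)} f^p\, d\mu\Big)^{1/p}.
\]

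The key step is a pigeonhole argument. Using the reverse doubling inequality \eqref{tuplaavuus toinen suunta}, which is where connectedness enters, I would show $C \sum_i \mu(B(x, r_i))^{\delta/s} \leq 1$ for any $\delta > 0$ (with $\mu(B(x_0, R))$ absorbed into the constant). Comparing the two sums termwise yields an index $i_x$ with
\[
\mu(B(x, r_{i_x}))^{\gamma - 1 + (p-1)/p} \Big(\int_{B(x, r_{i_x})} f^p\, d\mu\Big)^{1/p} \geq C\mu(B(x, r_{i_x}))^{\delta/s}.
\]
Choosing $\delta = s(\gamma p - \tilde{\gamma}\tilde{p})/p$, which is strictly positive precisely because of the hypothesis $\tilde{\gamma}\tilde{p} < \gamma p$, and raising to the $p$-th power turns this into $\mu(B_x)^{1 - \tilde{\gamma}\tilde{p}} \leq C \int_{B_x} f^p\, d\mu$ for the ball $B_x = B(x, r_{i_x})$.

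Finally, I would invoke the $5r$-covering theorem to extract from $\{B_x : x \in E\}$ a pairwise disjoint subfamily $\{B_j\}$ with $E \subset \bigcup_j 5B_j$, and combine doubling with disjointness to conclude
\[
\widetilde{\mathcal{H}}_\infty^{\tilde{\gamma}, \tilde{p}}(E) \leq \sum_j \mu(5B_j)^{1-\tilde{\gamma}\tilde{p}} \leq C \sum_j \int_{B_j} f^p\, d\mu \leq C\|f\|_{L^p(X)}^p < C\epsilon,
\]
and let $\epsilon \to 0$. The main obstacle I expect is the calibration in the pigeonhole step: one must choose $\delta$ so that $\delta p/s - \gamma p + 1$ equals exactly $1 - \tilde{\gamma}\tilde{p}$, and it is precisely the strict inequality $\tilde{\gamma}\tilde{p} < \gamma p$ that makes such a positive $\delta$ available, explaining why the theorem must lose a bit of exponent.
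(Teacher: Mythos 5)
Your proposal is correct and follows essentially the same route as the paper's own proof: the same reduction to bounded sets, the same dyadic annulus decomposition of the truncated potential, the same pigeonhole argument calibrated via the reverse doubling inequality with $\delta = s(\gamma p - \tilde{\gamma}\tilde{p})/p$, and the same $5r$-covering conclusion. No substantive differences to report.
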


%
%

We can see that in the metric space, with a doubling measure $\mu$, the Hausdorff content and the Hausdorff measure have the same null sets. This relation has been studied in \cite[Section 7]{KKST} for slightly different versions of the Hausdorff content and the Hausdorff measure. For our definitions, the result of \cite[Lemma 7.6]{KKST} follows without any extra assumptions. By the previous two theorems, we get as a corollary
the following result for arbitrary sets $E$.

\begin{corollary}
Let $1<p<\infty$, $1<\tilde{p}<\infty$, $0<\gamma <1$ and $0<\tilde{\gamma}<1$ be such that $\gamma p<1$, $\tilde{\gamma} \tilde{p}<1$ and $\tilde{\gamma} \tilde{p} < \gamma p$. Then $$\mathcal{C}_{\gamma,p}(E)>0 \, \text{ implies that } \, \widetilde{\mathcal{H}}^{\gamma,p}(E)>0.$$ If we also assume our space $X$ to be connected, then $$\widetilde{\mathcal{H}}^{\tilde{\gamma},\tilde{p}}(E)>0 \, \text{ implies that } \, \mathcal{C}_{\gamma,p}(E)>0.$$
\end{corollary}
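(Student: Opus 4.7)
The plan is to deduce both implications from Theorem 4.2 and Theorem 4.3 by contraposition, using the remark in the paragraph preceding the statement that, in a metric space with a doubling measure, the modified Hausdorff content $\widetilde{\mathcal{H}}_\infty^{\gamma,p}$ and the modified Hausdorff measure $\widetilde{\mathcal{H}}^{\gamma,p}$ have the same null sets. Neither direction requires a new construction; both are a matter of chaining inequalities.

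For the first implication, I would prove the contrapositive. Assume $\widetilde{\mathcal{H}}^{\gamma,p}(E)=0$. Since $\widetilde{\mathcal{H}}_\infty^{\gamma,p}(E)\le \widetilde{\mathcal{H}}_r^{\gamma,p}(E)$ for every $r>0$, letting $r\to 0$ gives $\widetilde{\mathcal{H}}_\infty^{\gamma,p}(E)=0$ (this direction is immediate and uses only monotonicity of the content in the allowed radii). Theorem 4.2 then yields
$$\mathcal{C}_{\gamma,p}(E)\le C\,\widetilde{\mathcal{H}}_\infty^{\gamma,p}(E)=0,$$
so $\mathcal{C}_{\gamma,p}(E)=0$, which is exactly the contrapositive of the desired statement. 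Note that connectedness is not needed here, matching the statement of the corollary.

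For the second implication, I would again argue by contraposition. Assume $X$ is connected and $\mathcal{C}_{\gamma,p}(E)=0$. The parameter hypotheses $\gamma p<1$, $\tilde\gamma\tilde p<1$, $\tilde\gamma\tilde p<\gamma p$ are precisely what Theorem 4.3 requires, and the connectedness assumption is imposed so that \eqref{tuplaavuus toinen suunta} is available, as needed there. Theorem 4.3 then gives $\widetilde{\mathcal{H}}_\infty^{\tilde\gamma,\tilde p}(E)=0$, and invoking the null-set equivalence in the nontrivial direction produces $\widetilde{\mathcal{H}}^{\tilde\gamma,\tilde p}(E)=0$, completing the contrapositive.

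The only substantive ingredient beyond a direct application of the two preceding theorems is the null-set equivalence $\widetilde{\mathcal{H}}_\infty^{\tilde\gamma,\tilde p}(E)=0\Longleftrightarrow \widetilde{\mathcal{H}}^{\tilde\gamma,\tilde p}(E)=0$. I expect this to be the main (and only) obstacle: the nontrivial direction requires upgrading a cover by balls of arbitrarily large radii to one by balls of small radii without essentially increasing $\sum_i\mu(B(x_i,r_i))^{1-\tilde\gamma\tilde p}$. The doubling property of $\mu$ does this in the standard way (splitting each large ball into a controlled number of small sub-balls whose measures, by doubling, are each comparable to a fixed fraction of the parent), exactly as in \cite[Lemma 7.6]{KKST}. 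Since this is just a technical bookkeeping step using doubling, I would simply cite the preceding discussion rather than re-derive it.
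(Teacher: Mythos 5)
Your proposal is correct and follows the paper's own route exactly: the paper obtains the corollary from Theorems 4.2 and 4.3 by contraposition, delegating the null-set equivalence of the content $\widetilde{\mathcal{H}}_{\infty}^{\gamma,p}$ and the measure $\widetilde{\mathcal{H}}^{\gamma,p}$ to the discussion preceding the statement (the analogue of \cite[Lemma 7.6]{KKST}), just as you do. The only minor quibble is that your parenthetical heuristic for the nontrivial direction of that equivalence (splitting large balls into controlled numbers of small sub-balls) is not quite how the cited argument runs, but since both you and the paper cite that step rather than prove it, this does not affect the comparison.
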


\vspace{0.5cm}
\noindent
\small{\textsc{J.N.},}
\small{\textsc{Department of Mathematics and Statistics},}
\small{\textsc{P.O. Box 35},}
\small{\textsc{FI-40014 University of Jyv\"askyl\"a},}
\small{\textsc{Finland}}\\
\footnotesize{\texttt{juho.nuutinen@jyu.fi}}

\vspace{0.3cm}
\noindent
\small{\textsc{P.S.},}
\small{\textsc{Department of Mathematics},}
\small{\textsc{P.O. Box 11100},}
\small{\textsc{FI-00076 Aalto University},}
\small{\textsc{Finland}}\\
\footnotesize{\texttt{pilar.silvestre@gmail.com}}

\end{document}